\documentclass[letterpaper,12pt]{amsart}
\usepackage{amsmath}
\usepackage{amssymb}
\usepackage{fix-cm}
\usepackage[left=1in,top=1in,right=1in,bottom=1in]{geometry}
\usepackage{hyperref}
\usepackage{amsthm}
\usepackage{graphicx}

\newcommand{\comments}[1]{}

\newtheorem{theorem}{Theorem}[section]

\newtheorem{lemma}[theorem]{Lemma}

\DeclareGraphicsExtensions{.eps}

\newtheorem{utv*}{Proposition}
\newtheorem{hyp*}{Conjecture}

\newtheorem{zamech}{Remark}
\newtheorem{zamech*}{Remark}
\newtheorem*{th*}{Theorem}

\numberwithin{equation}{section}

\newcommand{\norm}[2]{\left\| #1 \right\|_{#2}}
\newcommand{\mdl}[1]{\left| #1 \right|}
\newcommand{\ave}[2]{m_{#1} #2}
\newcommand{\av}[2]{m_{#2} #1}

\newcommand{\R}{\mathbb{R}}

\def\sli{\sum\limits}
\def\ili{\int\limits}

\def\gp{\gamma_+}

\def\ep{\varepsilon}
\def\vf{\varphi}

\def\Om{\Omega}


\title{Sharp estimates involving $A_\infty$ and $LlogL$ constants, and their applications to PDE}
\author{O. Beznosova}
\address{Department of Mathematics, Baylor University, One Bear Place \#97328, Waco, TX 76798-7328, USA.}
\author{A. Reznikov}
\address{Department of Mathematics, Michigan State University, East
Lansing, MI 48824, USA}
\address{St.-Petersburg Department of the Steklov Mathematical Institute, Fontanka, 27, 191023, Saint Petersburg, Russia.}
\subjclass[2000]{42B20, 42B25}
\keywords{$A_\infty$ weights, $RH_1$ weights, Reverse H\"older condition, sharp estimates, elliptic PDE}

\begin{document}

\maketitle

\begin{abstract}
{It is a well known fact that the union of the Reverse H\"{o}lder classes, $\bigcup_{p>1} RH_p$ coincides with the union of the Muckenhoupt classes $\bigcup_{p>1} A_p = A_\infty$, but the $A_\infty$ constant of the weight $w$, which is a limit of its $A_p$ constants, is not a natural characterization for the weight in Reverse H\"{o}lder classes. We introduce the
$RH_1$ condition as a limiting case of the $RH_p$ inequalities as $p$ tends to $1$, show sharp bound on $RH_1$ constant of the weight $w$ in terms of its $A_\infty$ constant. We also prove the sharp version of the Gehring theorem for the case $p=1$, completing the answer to the famous question of Bojarski in dimension one, see \cite{Boiarsky:1985}.

We illustrate our results by two straight-forward applications: to the Dirichlet problem for elliptic PDE's.

To prove our main theorem we are going to use the Bellman function technique. We do it in the spirit of the paper \cite{Vasyuninrus:04}. However, to simplify our calculations, we will use the Monge-Ampere equation and some intuition from papers \cite{Vasyuninrus:04}, \cite{SlavinVasyunin}. In the same spirit we find the ``extremal'' function $w$.

Despite the fact that our methods are not new, we believe that our results are useful, thus we prove them in full details.

}
\end{abstract}

\section{Definitions and Main Results.}
\label{s: intro}

We say that $w$ is a {\it weight} if it is a locally integrable function on the real line,
positive almost everywhere (with respect to the Lebesgue measure).
Let $\ave{J}{w}$ be the average of a weight $w$ over a given interval $J \subset \R$:
$$
\ave{J}{w} := \frac{1}{\mdl{J}} \int_J w dx.
$$
A weight $w$ belongs to the \textit{Muckenhoupt class} $A_p$ whenever its Muckenhoupt constant $[w]_{A_p}$ is finite:
\begin{equation}\label{defAp}
[w]_{A_p} := \sup_{J\subset\R} \; \ave{J}{w} \left(\ave{J}{\left(w^{-\frac{1}{p-1}}\right)}\right)^{p-1} < \infty.
\end{equation}

Note that by H\"{o}lders inequality, $[{w}]_{A_p} \geqslant 1$ holds for all $1<p<\infty$, as
well as the following inclusion:
$$
if \;\;\; 1<p\leqslant q < \infty \;\;\; then \;\;\; A_p\; \subseteq
A_q, \; \; \; \; [{w}]_{A_q} \; \leqslant \; [{w}]_{A_p}.
$$
So, for $1<p<\infty$ Muckenhoupt classes $A_p$ form an increasing chain. There are two natural limits of it  - as $p$ approaches $1$ and as $p$ goes to $\infty$. We will be interested in the limiting case as $p \rightarrow \infty$, $A_\infty = \bigcup_{p>1} A_p$. There are several equivalent definitions of it, we will state one that we are going to use (the natural limit of $A_p$ conditions, that also defines the $A_\infty$ constant of the weight $w$), for other equivalent definitions see \cite{GarciaCuervaRubioDeFrancia:85}, \cite{Grafakos:03} or \cite{Stein:93}.
\begin{equation}\label{defAinfty}
w \in A_\infty  \;\;\;\;\;\;\Longleftrightarrow\;\;\;\;\;\;[w]_{A_\infty} :=\;\;\; \sup_{J\subset \R} \;\;\ave{J}{w} \;e^{-\ave{J}{(\log w)}} \;\; < \infty.
\end{equation}

A weight $w$ belongs to the {\it Reverse H\"{o}lder class} $RH_p$ ($1<p<\infty$) if
\begin{equation}\label{defRHp}
[w]_{RH_p}\;:=\;\sup_{J\subset \R} \;\frac{\left( m_Jw^p \right)^{1/p}}{ m_Jw} < \infty.
\end{equation}

Note that by H\"{o}lders inequality the Reverse H\"{o}lder classes satisfy:
$$
if \;\;\; 1<p\leqslant q<\infty, \; \; \; then \; \; \; RH_q \;
\subseteq \; RH_p \;\;\; and \;\;\; 1 \; \leqslant \; [w]_{RH_p}
\; \leqslant \; [w]_{RH_q},
$$
which is similar to the inclusion chain of the $A_p$ classes, except inclusion runs in the opposite direction. And similarly we can consider two limiting cases $RH_\infty$ (the smallest) and $RH_1$ (the largest). Same as in the case of Muckenhoupt classes we are more interested in the largest one, let us call it $RH_1 := \bigcup_{p>1} RH_p$.

For the $A_\infty$ and $RH_1$ in 1974 Coifman and Fefferman showed that $A_\infty = \bigcup_{p>1} RH_p = RH_1$. Now it is a well known fact (see \cite{GarciaCuervaRubioDeFrancia:85}, \cite{Grafakos:03}, \cite{Stein:93}) that if $w \in A_p$ then $w \in RH_q$ for some $1< q< \infty$ and vice versa. In  \cite{Grafakos:03}  dependencies of $p$ and $q$ and of $A_p$ and $RH_q$ constants in any dimension are traced roughly.  The $A_1$ and $RH_\infty$ classes are not overlooked either, a lot of information about them can be found in \cite{Cruz-UribeNeugenbauer:89}.  Exact dependencies are much harder to trace, but for $1\leqslant p \leqslant \infty$ and $1 < q \leqslant \infty$ in one dimensional case precise dependencies between $A_p$ and $RH_q$ are found in \cite{Vasyuninrus:04}.

The question is : Is anything missing in the precise relationships between $A_p$ and $RH_q$ constants?

The answer is ``Yes'' and let us now describe the missing little piece of this puzzle.

Union of Reverse H\"{o}lder classes is $A_\infty$, but the $A_\infty$ constant (the natural limit of $A_p$ constants) has nothing to do with the Reverse H\"{o}lder constants.
The natural limit as $p \rightarrow 1^+$ of the Reverse H\"{o}lder inequalities is the following condition, which we will take as a definition of the class $RH_1$:
\begin{equation}\label{defRH1}
w \in RH_1 \;\;\; \Longleftrightarrow \;\;\; [w]_{RH_1} \; := \; \sup_{J\subset \R}m_J \left(\frac{w}{\ave{J}{w}} \log \frac{w}{\ave{J}{w}}\right)\;\;<\;\infty,
\end{equation}
where $\log$ is a regular logarithm base $e$, which could be negative. Nevertheless, by the Jensen inequality $RH_1$ constant defined this way is always nonnegative.

The $RH_1$ constant of the weight $w$ is the natural limit of $RH_p$ constants in the sense that for every $I \subset \R$
\begin{equation}\label{RH1aslimitRHp}
\ave{I}{\left(\frac{w}{\ave{I}{w}} \log \frac{w}{\ave{I}{w}}\right)}\; = \; \lim_{p \rightarrow 1^+} \frac{p}{p-1} \log\; \frac{\ave{I}{(w^p)}^\frac{1}{p}}{\ave{I}{w}}\;
\end{equation}

We want to make one remark about this definition.
\begin{zamech}
The inequality \ref{defRH1} can be rewritten in the following way:
$$
\ave{J}{\left(w\log(w)\right)}\leqslant \ave{J}{w} \; \log (\ave{J}{w})+Q\ave{J}{w}.
$$
Note that since function $x \log x$ is concave, by Jensen's inequality we also have
$$
\ave{J}{w} \; \log (\ave{J}{w}) \leqslant \ave{J}{\left(w\log(w)\right)}.
$$
\end{zamech}

Condition (\ref{defRH1}) is actually much more natural for those places where one is dealing with the Reverse H\"{o}lder conditions rather than with the $A_p$ conditions, see, for example, \cite{Fefferman:89}, \cite{Corporente:2007}, \cite{HytPer2011}.

There is no standard notation here, in some places this class is called $RH_{L\log L}$ since (\ref{defRH1}) is the reverse Jensen's inequality for the function $x \log x$, in other places it is called $G_1$ to emphasize the contribution of Gehring to the study of the Reverse H\"{o}lder classes. Sometimes for the $RH_1$ constant one takes $\sup_{J\subset \R} \exp\left\{m_J \left(\frac{w}{\ave{J}{w}} \log \frac{w}{\ave{J}{w}}\right)\right\}$ to remove logarithm in the right hand side of the (\ref{RH1aslimitRHp}). We keep our notation because it is shorter and its is clear that we are working with the Reverse H\"{o}lder condition.

\paragraph{Different ways to define $RH_1$ constant of the weight $w$.}\label{diffways}
 First, observe that, trivially, logarithm in the definition of the $RH_1$ constant can be replaced by $\log^+ (x)$,
$\left( \log^+ (x) = \max (\log x,0) \right)$ or $\log (e+x)$.
\begin{lemma}

\end{lemma}

Secondly, from the Stein lemma (see \cite{Stein:1969}), we know that
$$
3^{-n} \; \ave{I}{\left( M(f \chi_I) \right)} \; \leqslant \; \ave{I}{\left( f \; \log \left( e + \frac{f}{\ave{I}{f}} \right) \right)} \; \leqslant \; 2^n \; \ave{I}{(f \chi_I)}
$$
Thus an equivalent way to define $RH_1$ constant is
\begin{equation}
\label{def_equiv1_RH1} [w]_{RH_1^\prime} \; := \; \sup \frac{1}{\ave{I}{w}} \int_I M(w \chi_I),
\end{equation}
which, indeed, is one of the ways to define class $A_\infty$, see for example \cite{Wilbook} or \cite{HytPer2011}.

One can also define Reverse H\"{o}lder and $A_\infty$ constants using Luxemburg norms. Same is true for $RH_1$-constant. Let us first define Luxemburg norm of a function in the following way: for an Orlitz function $\Phi: [0,\infty]\mapsto [0,\infty]$, we define $\norm{w}{\Phi(L),I}$ to be:
$$
\norm{w}{\Phi(L),I} \; := \; \inf \left\{ \lambda > 0\colon \; \frac{1}{|I|} \int_I \Phi\left( \frac{|w|}{\lambda} \right) \; \leqslant \; 1 \right\}.
$$
Iwaniec and Verde in \cite{IwVer} showed that for every $w$ and $I\subset \R^n$
$$
\norm{w}{L\log L,I} \; \leqslant \; \int_I \log \left( e + \frac{w}{\ave{I}{w}} \right) dx \; \leqslant \; 2 \norm{w}{L\log L,I},
$$
so another equivalent definition of the $RH_1$ constant of the weight $w$ is
\begin{equation}
\label{def_equiv2_RH1} [w]_{RH_1^{\prime \prime}} \; := \; \sup_{I\subset \R} \frac{\norm{w}{L\log L,I}}{\norm{w}{L,I}}.
\end{equation}

{\bf Comparability of $RH_1$ and $A_\infty$ constants.} Equivalence of the $RH_1$ and $A_\infty$ conditions is known for a long time, but not the relationship between the $RH_1$ and $A_\infty$ constants.
In this paper we prove the following inequality:
\begin{theorem}[Main result 1 : comparability of $RH_1$ and $A_\infty$ constants]
\label{theorem_RH1<Ainfty} A weight $w$ belongs to the Muckenhoupt class $A_\infty$ if and only if $w \in RH_1$. Moreover,
\begin{equation}\label{maininequality}
[w]_{RH_1} \;\leqslant \; C \;  [w]_{A_\infty},
\end{equation}

where the constant $C$ can be taken to be $e$ ($C=e$). Moreover, the constant $C=e$ is the best possible.
\end{theorem}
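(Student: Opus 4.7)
My plan is to prove Theorem \ref{theorem_RH1<Ainfty} by the Bellman function technique in the Monge--Amp\`ere formalism of \cite{SlavinVasyunin} advertised in the abstract. Fix $Q\geq 1$, let $w$ satisfy $[w]_{A_\infty}\leq Q$, and for a subinterval $I$ set $x_1 = m_I w$, $x_2 = m_I\log w$. Jensen's inequality together with (\ref{defAinfty}) traps $(x_1,x_2)$ in the (non-convex) domain $\Omega_Q := \{(x_1,x_2):\,x_2\leq \log x_1\leq x_2+\log Q\}$. Introduce the Bellman function
\[
B(x_1,x_2) := \sup\bigl\{ m_I(w\log w)\colon m_I w = x_1,\; m_I\log w = x_2,\; [w]_{A_\infty,I}\leq Q\bigr\}.
\]
Unfolding (\ref{defRH1}), the inequality (\ref{maininequality}) is equivalent to the two-variable bound $B(x_1,x_2) \leq x_1\log x_1 + eQ\, x_1$ on $\Omega_Q$.

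The first step will be to exploit the scaling symmetry $w\mapsto \lambda w$. Since the $A_\infty$ constant is scale-invariant while $m_I\bigl(\lambda w\log(\lambda w)\bigr) = \lambda m_I(w\log w) + \lambda(\log\lambda) x_1$, the Bellman function collapses onto a one-variable profile:
\[
B(x_1,x_2) \;=\; x_1\log x_1 \,+\, x_1\, b(\tau), \qquad \tau := \log x_1 - x_2 \in [0,\log Q],
\]
and the theorem reduces to proving $b(\log Q)\leq eQ$. The boundary condition $b(0)=0$ is free: on $\tau=0$ Jensen's equality forces $w$ to be essentially constant on $I$. The second step is the \emph{main inequality}: halving $I = I_-\sqcup I_+$ produces admissible pairs with $(x_1,x_2)=\tfrac12\bigl((x_1^-,x_2^-)+(x_1^+,x_2^+)\bigr)$, yielding the midpoint-concavity
\[
B(x_1,x_2) \;\geq\; \tfrac12\bigl(B(x_1^-,x_2^-)+B(x_1^+,x_2^+)\bigr)
\]
whenever both endpoints stay in $\Omega_Q$. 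Saturating this along the extremal foliation gives the Monge--Amp\`ere equation $\det B'' = 0$; with the ansatz above it reduces to the ODE
\[
b''(\tau)\bigl(1-b'(\tau)\bigr) \;=\; b'(\tau)^2,
\]
which, writing $v=b'$, separates as $(v^{-2}-v^{-1})\,dv = d\tau$ and integrates implicitly. Matching to $b(0)=0$ and to the natural free-boundary behaviour at $\tau=\log Q$ produces an explicit $b$, from which the bound $b(\log Q)\leq eQ$ can be read off.

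For sharpness I will test the extremal family $w_\alpha(x) = x^{-\alpha}$, $\alpha\in(0,1)$, on $I=(0,1)$. By scaling both suprema are attained on every sub-interval $(0,r)$, and a direct calculation gives
\[
[w_\alpha]_{A_\infty} = \frac{e^{-\alpha}}{1-\alpha}, \qquad [w_\alpha]_{RH_1} = \frac{\alpha}{1-\alpha} + \log(1-\alpha),
\]
so that
\[
\frac{[w_\alpha]_{RH_1}}{[w_\alpha]_{A_\infty}} \;=\; \frac{\alpha + (1-\alpha)\log(1-\alpha)}{e^{-\alpha}} \;\longrightarrow\; e \quad\text{as }\alpha\to 1^-,
\]
ruling out any constant smaller than $e$ in (\ref{maininequality}). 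The principal obstacle I anticipate lies in the second step: because $\Omega_Q$ is not convex (its upper boundary $\log x_1-x_2=\log Q$ is not preserved by midpoints of arbitrary admissible pairs), the candidate $b$ from the ODE must be verified as a genuine super-solution of the main inequality only along permissible splittings --- equivalently, one checks concavity of $B$ along the extremal foliation of constant-$\tau$ curves rather than globally on $\Omega_Q$.
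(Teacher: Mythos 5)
Your route is, at bottom, the paper's route: the same exact Bellman function, the same Monge--Amp\`ere candidate, and the same extremal power weights. The homogeneity reduction is correct, and your ODE is exactly right: under the ansatz $B=x_1\log x_1+x_1b(\tau)$, $\tau=\log x_1-x_2$, the equation $\det B''=0$ becomes $b''(1-b')=b'^2$, and its solution with $b(0)=0$ and the free-boundary condition $b'(\log Q)=1$ reproduces the paper's function $B(x,y)=x\log v+\frac{x-v}{\gamma}$ of Lemma \ref{lemma2}; one gets $b(\log Q)=\frac1\gamma+\log\gamma-1$ with $\gamma-\log\gamma=1+\log Q$, which is $\leqslant eQ$ and asymptotically of size $eQ$ (note you also need $b'\geqslant 1>0$ to reduce the bound on all of $[0,\log Q]$ to the single value $b(\log Q)$, and $b\geqslant 0$ is exactly the lower bound (\ref{ineq_3}) needed later). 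Your sharpness family $w_\alpha(x)=x^{-\alpha}$ is essentially the paper's extremizer, and the ratio computation tending to $e$ is correct, modulo verifying that both suprema are indeed attained on intervals of the form $(0,r)$.

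The genuine gap is in how you pass from the candidate to the integral inequality. First, the ``main inequality'' for the \emph{exact} Bellman function is not available in the direction you use it: midpoint concavity of the sup-function requires concatenating near-extremizers, and the concatenated weight need not satisfy $[w]_{A_\infty}\leqslant Q$ on intervals straddling the junction, even when both endpoints (or the whole chord) lie in $\Omega_Q$. So the ODE step is only a heuristic to \emph{guess} $b$; the proof must run the other way: verify that the explicit candidate is locally concave as a function of two variables ($B''_{yy}\leqslant0$ and $\det B''=0$, i.e.\ $b''\leqslant0$, $b'\geqslant1$, as in (\ref{ineq_4})), that it majorizes $x_1\log x_1$, and then perform the Bellman induction giving $m_J(w\log w)\leqslant B(x_J,y_J)\leqslant x_J\log x_J+eQx_J$. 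Second, it is in this induction --- not in the concavity check --- that the non-convexity of $\Omega_Q$ bites: the chords joining the Bellman points of the two children of an interval can leave $\Omega_Q$, and your proposed remedy (checking concavity ``along the extremal foliation of constant-$\tau$ curves'') does not meet it; the foliation of the Monge--Amp\`ere solution consists of the tangent lines to the upper boundary, along which $B$ is affine, not the level curves of $\tau$, and in any case the induction needs concavity along the chords produced by the splitting, which are unrelated to the foliation. The fix the paper uses is the splitting lemma (Lemma \ref{theorem_Vasyunin}): one may split with ratios bounded away from $0$ and $1$ so that the chord stays in the enlarged domain $\Omega_{Q+\varepsilon}$, one builds the candidate there, and then removes $\varepsilon$ by continuity in $Q$; one also needs the truncation $w_n$ of Lemma \ref{lemma_technical1} and a convergence argument to pass to the limit under the integral. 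Without these ingredients your plan does not yield (\ref{maininequality}); with them it becomes the paper's proof of Lemma \ref{lemma1} and Lemma \ref{lemma2}, with your one-variable reduction serving as a pleasant simplification of the explicit differentiation.
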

Bellman function proof of this theorem can be found in Section \ref{s: proof : RH1<eAinfty}. An independent proof of the analogue of this theorem for the constant $[w]_{RH_1^\prime}$ was recently obtained in \cite{HytPer2011}.

Moreover, using a similar Bellman Function approach, one can prove the following theorem.
\begin{theorem}\label{Th:funnybound} If $[w]_{RH_1}=Q$ then
$$
[w]_{\infty}\leqslant C \frac{e^{e^Q-1}}{e^Q},
$$
where $C$ does not depend on $Q$. Moreover, this inequality is sharp in $Q$.
\end{theorem}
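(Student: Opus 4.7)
The plan is to use a Bellman function argument paralleling the proof of Theorem~\ref{theorem_RH1<Ainfty}, now running in the reverse direction. On the natural three-dimensional domain
\[
\Omega_Q = \{(x_1,x_2,x_3) : e^{x_2} \leq x_1,\; x_1 \log x_1 \leq x_3 \leq x_1 \log x_1 + Q x_1\}
\]
I would define
\[
B(x_1,x_2,x_3) := \sup \bigl\{\,\ave{I}{w}\, e^{-\ave{I}{(\log w)}} : \ave{I}{w}=x_1,\, \ave{I}{(\log w)}=x_2,\, \ave{I}{(w\log w)}=x_3,\, [w]_{RH_1}\leq Q\bigr\},
\]
the sup being over all weights $w$ on intervals $I$ whose averages hit the prescribed values and which satisfy the $RH_1$ inequality with constant $\leq Q$ on every subinterval of $I$. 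Both the $A_\infty$ quantity $x_1 e^{-x_2}$ and the $RH_1$ quantity $x_3/x_1 - \log x_1$ are invariant under the dilation $w\mapsto cw$, so $B$ depends only on the two scale invariants $v = x_2 - \log x_1$ and $u = x_3/x_1 - \log x_1 \in [0,Q]$. Writing $B = \tilde B(v,u)$, one has $\tilde B(v,u)\geq e^{-v}$ automatically, and the theorem becomes equivalent to the uniform bound $\tilde B(v,u)\leq C\, e^{e^Q-1}/e^Q$ on the realisable subset.

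The remaining work is of the kind standard for this method. First, derive the main Bellman inequality $B((P^-+P^+)/2)\geq (B(P^-)+B(P^+))/2$ from bisecting $I$ into halves, valid for every admissible split in $\Omega_Q$. Second, pin down the boundary data on the face $\{u=0\}$: Jensen's equality there forces $w\equiv\mathrm{const}$, hence $\tilde B\equiv 1$. Third, following the Monge-Ampere strategy of \cite{SlavinVasyunin} and \cite{Vasyuninrus:04}, I would look for the extremal $B$ as a smooth solution of $\det(\operatorname{Hess} B)=0$ in the interior of $\Omega_Q$ matching these data; the scale symmetry collapses the PDE to a single-variable ODE in $u$ whose closed-form solution should be $\Phi(u)=e^{e^u-1-u}$, producing in particular $\Phi(Q)=e^{e^Q-1}/e^Q$. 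Fourth, verify that the resulting candidate satisfies the main inequality on all of $\Omega_Q$, which gives the upper bound. Finally, read off the extremal weight $w_Q$ from the characteristics of the Monge-Ampere equation: I expect $w_Q$ to have an iterated (nested-logarithmic) form chosen so that $[w_Q]_{RH_1}=Q$ and $[w_Q]_{A_\infty}\asymp e^{e^Q}/e^Q$, which establishes the sharpness of the rate in $Q$.

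The main obstacle is selecting the correct branch of the Monge-Ampere solution. Because $\Omega_Q$ is bounded in the $u$-direction by the moving face $\{u=Q\}$, there is a one-parameter family of Bellman candidates differing by their behaviour on that face, and isolating the minimal one that still majorises $x_1 e^{-x_2}$ requires the careful foliation analysis of \cite{Vasyuninrus:04}. A secondary difficulty lies on the sharpness side: the extremal $w_Q$ must obey $[w_Q]_{RH_1}\leq Q$ on \emph{every} subinterval, not merely on $I$ itself, so its nested-exponential profile has to be tuned so that the $RH_1$ supremum is attained precisely on $I$. This tuning is the structural reason for the double-exponential $e^{e^Q}/e^Q$ growth rate, and verifying it rigorously will likely be the most delicate part of the argument.
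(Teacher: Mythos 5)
There is a genuine structural flaw in your setup. The quantity you are optimizing, $\ave{I}{w}\,e^{-\ave{I}{(\log w)}}$, is not an average of a pointwise expression in $w$; it is a nonlinear function of two of your Bellman variables. Consequently your three-variable function is degenerate: whenever the admissible set is nonempty, $B(x_1,x_2,x_3)=x_1e^{-x_2}$ identically (in your reduced variables, $\tilde B(v,u)=e^{-v}$ exactly, not merely $\geqslant e^{-v}$). There is therefore no unknown function for the Monge--Amp\`ere machinery to determine, and, worse, the ``main Bellman inequality'' you plan to derive by bisecting $I$ cannot be derived: concatenating admissible weights on the two halves does not average the objective (since the objective is not an integral functional), and the explicit function $x_1e^{-x_2}$ is a saddle (its Hessian has negative determinant), so midpoint concavity is simply false. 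What your plan actually needs, hidden in the phrase ``on the realisable subset,'' is a description of which values of $x_2=\ave{I}{(\log w)}$ are attainable given $\ave{I}{w}$, $\ave{I}{(w\log w)}$ and $[w]_{RH_1}\leqslant Q$ --- and that is a different Bellman problem which your sketch never sets up.

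That different problem is exactly how the paper proceeds: it fixes only the two integral quantities $x=\ave{I}{w}$, $y=\ave{I}{(w\log w)}$ on the domain $\Omega_Q=\{x\log x\leqslant y\leqslant x\log x+Qx\}$ and takes $B(x,y)=\inf\{\ave{I}{(\log w)}\}$ over admissible $w$; this is a legitimate (locally convex) Bellman function, computed along the tangent-line foliation as $B(x,y)=\log v+\frac{x-v}{\gamma_- v}$, where $\gamma_-$ is the smaller root of $t-\log t=Q+1$ and $v=v(x,y)\geqslant x$ is defined by $y=(\log v+\gamma_-)x-v\gamma_-$. The theorem then follows from one-variable calculus: with $s=x/v\in[\gamma_-,1]$,
\begin{equation*}
x\,e^{-B(x,y)}=s\,e^{\frac{1-s}{\gamma_-}}\leqslant \gamma_-\,e^{\frac{1-\gamma_-}{\gamma_-}},
\end{equation*}
which yields the doubly exponential bound since $\gamma_-\sim e^{-(Q+1)}$. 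Note also that the sharpness example is not of nested-logarithmic form as you anticipate: it is the power weight $w(t)=\gamma_-^{-1}t^{(1-\gamma_-)/\gamma_-}$, for which $[w]_{RH_1}=Q$ and $\ave{I}{w}\,e^{-\ave{I}{(\log w)}}=\gamma_-\exp\bigl(\frac{1-\gamma_-}{\gamma_-}\bigr)$. Your guess of the final answer $e^{e^Q-1}/e^Q$ is consistent with the statement, but the route you describe cannot produce it without being reorganized along these lines.
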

We give a sketch of the proof in the Section \ref{Pr:funnybound}. We also note that in the paper \cite{HytPer2011} authors got a bound similar to the Theorem \ref{theorem_RH1<Ainfty} (without sharpness). However, as far as we know the Theorem \ref{Th:funnybound} is new, and we find the bound very surprising.

\paragraph{ 1-Gehring Lemma.} Reverse H\"{o}lder classes have a remarkable self-improvement
property, discovered by Gehring in 1973, see \cite{Gehring:73}.
\begin{theorem}[Gehring's theorem] Suppose $w\in RH_p$ for some $1< p < \infty$.
Then there exists $\varepsilon > 0$, depending only on $p$ and the $RH_p$ constant of $w$, such that $w\in RH_{p+\varepsilon}$.
\end{theorem}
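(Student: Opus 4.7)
The plan is the classical Gehring self-improvement argument: I convert the $RH_p$ inequality into a reverse distributional inequality via a Calder\'on--Zygmund stopping time, and then bootstrap it with the layer-cake identity and the $L^q$-boundedness of the dyadic maximal function to pick up a strictly larger exponent.

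Fix a bounded interval $I_0 \subset \R$, set $\lambda_0 = \ave{I_0}{w}$ and let $K = [w]_{RH_p}$. For each $\lambda > \lambda_0$ take the maximal dyadic subintervals $\{I_j^\lambda\}$ of $I_0$ on which $\ave{I_j^\lambda}{w} > \lambda$. Maximality yields $\lambda < \ave{I_j^\lambda}{w} \leqslant 2\lambda$, $w \leqslant \lambda$ a.e.\ off $\bigcup_j I_j^\lambda$, and $\{w>\lambda\} \cap I_0 \subset \bigcup_j I_j^\lambda$ up to a null set. Applying the $RH_p$ hypothesis on each $I_j^\lambda$ gives $\ave{I_j^\lambda}{w^p} \leqslant K^p(\ave{I_j^\lambda}{w})^p \leqslant (2K)^p \lambda^{p-1} \ave{I_j^\lambda}{w}$. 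Summing over $j$ produces the key distributional estimate
\[
\int_{\{w>\lambda\}\cap I_0} w^p\,dx \;\leqslant\; (2K)^p\,\lambda^{p-1} \int_{\{M^d w > \lambda\}\cap I_0} w\,dx,
\]
where $M^d$ denotes the dyadic maximal function on $I_0$.

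Next I use $w^\varepsilon = \varepsilon \int_0^w \lambda^{\varepsilon-1}\,d\lambda$ and Fubini to write
\[
\int_{I_0} w^{p+\varepsilon}\,dx \;=\; \varepsilon \int_0^\infty \lambda^{\varepsilon-1} \int_{\{w>\lambda\}\cap I_0} w^p\,dx\,d\lambda.
\]
The slice $\lambda \in (0,\lambda_0]$ contributes only a harmless boundary term of order $\lambda_0^{p+\varepsilon}|I_0|$. On the slice $\lambda > \lambda_0$ I insert the distributional estimate, swap the order of integration, and bound the resulting $\int_{I_0} w\,(M^d w)^{p+\varepsilon-1}$ via H\"older together with the Hardy--Littlewood inequality $\|M^d w\|_{L^{p+\varepsilon}} \leqslant C_{p+\varepsilon}\|w\|_{L^{p+\varepsilon}}$. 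This yields a self-referential estimate of the form
\[
\int_{I_0} w^{p+\varepsilon}\,dx \;\leqslant\; K^p \lambda_0^{p+\varepsilon}|I_0| \;+\; \frac{(2K)^p\, C_{p+\varepsilon}\,\varepsilon}{p+\varepsilon-1}\, \int_{I_0} w^{p+\varepsilon}\,dx.
\]
Choosing $\varepsilon = \varepsilon(p,K) > 0$ small enough forces the coefficient of the absorbing term strictly below $1$; dividing by $|I_0|$ and taking $(p+\varepsilon)$-th roots then gives $w \in RH_{p+\varepsilon}$ with an explicit constant depending only on $p$ and $K$.

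The main obstacle is the circularity of the absorption step, which presumes $\int_{I_0} w^{p+\varepsilon}$ to be finite. The standard remedy is to run the entire argument with the distribution of $w$ cut off at height $N$ (equivalently, replace the outer factor $w^\varepsilon$ by $\min(w,N)^\varepsilon$ inside the layer-cake integral), obtain the inequality with constants independent of $N$, and then let $N\to\infty$ by monotone convergence. Everything else, including the precise quantitative dependence of $\varepsilon$ on $p$ and $K$, is routine bookkeeping.
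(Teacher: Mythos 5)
Your route is the classical Gehring bootstrap (Calder\'on--Zygmund stopping time at each level $\lambda>\ave{I_0}{w}$, a reverse distributional inequality, layer-cake, absorption), which is genuinely different from the paper: the paper does not prove this statement at all --- it quotes it from Gehring's article --- and the related results it does prove (the sharp versions, Theorems \ref{theorem_sharp_Gehr_lemma1} and \ref{theorem_sharp_Gehr_lemma2}) are obtained by Bellman functions, which buy sharp constants in dimension one, whereas your argument is elementary and gives a non-sharp but explicit $\varepsilon(p,[w]_{RH_p})$. Up to the last step your computations are fine: maximality gives $\lambda<\ave{I_j^\lambda}{w}\leqslant 2\lambda$, the distributional estimate with $\{M^dw>\lambda\}$ on the right is correct, and since the absorption coefficient $\varepsilon\,(2K)^p C_{p+\varepsilon}/(p+\varepsilon-1)$ tends to $0$ as $\varepsilon\to 0^+$ (for fixed $p>1$), the choice of $\varepsilon$ is legitimate \emph{provided} $\int_{I_0}w^{p+\varepsilon}<\infty$.

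The genuine gap is exactly the step you dismiss as routine: the truncation you propose does not remove the circularity. If you replace the outer factor $w^{\varepsilon}$ by $\min(w,N)^{\varepsilon}$, the left-hand side becomes the finite quantity $\int_{I_0}w^{p}\min(w,N)^{\varepsilon}$, but after Fubini the right-hand side becomes $\frac{(2K)^p\varepsilon}{p+\varepsilon-1}\int_{I_0}w\,\min(M^dw,N)^{p+\varepsilon-1}$, and your H\"older-plus-maximal-theorem step turns this back into $\bigl(\int_{I_0}w^{p+\varepsilon}\bigr)^{1/(p+\varepsilon)}\bigl(\int_{I_0}(M^dw)^{p+\varepsilon}\bigr)^{(p+\varepsilon-1)/(p+\varepsilon)}$, i.e.\ the full untruncated integral, which the truncated left-hand side does not dominate; nor can you push the cutoff through the maximal operator, since $\min(M^dw,N)\leqslant M^d(\min(w,N))$ is false in general. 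So the ``inequality with constants independent of $N$'' that you intend to pass to the limit in is never established. Two standard repairs: (i) upgrade the distributional inequality so that only level sets of $w$ appear --- since $\ave{I_j^\lambda}{w}>\lambda$ forces $\int_{I_j^\lambda\cap\{w\leqslant\lambda/2\}}w\leqslant\frac12\int_{I_j^\lambda}w$, hence $\int_{I_j^\lambda}w\leqslant 2\int_{I_j^\lambda\cap\{w>\lambda/2\}}w$, giving $\int_{\{w>\lambda\}\cap I_0}w^p\leqslant 2(2K)^p\lambda^{p-1}\int_{\{w>\lambda/2\}\cap I_0}w$ --- after which no maximal function theorem is needed and the truncated layer cake absorbs directly, because $w\,\min(w,N)^{p+\varepsilon-1}\leqslant w^{p}\min(w,N)^{\varepsilon}$ pointwise; or (ii) prove the estimate first for bounded weights (where your absorption is legitimate) and pass to general $w$ via $\min(w,N)$, which however requires the non-trivial fact that truncation does not increase the $RH_p$ constant --- the $RH_p$ analogue of Lemma \ref{lemma_technical1}, cf.\ \cite{RVV:2010} --- and not mere bookkeeping.
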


In 1985 Bojarski (see \cite{Boiarsky:1985}) posed the question of finding the sharp dependence of $\varepsilon$ on the $p$ and the $RH_p$ constant of the weight $w$ (and the dimension in multidimensional case). The sharp asymptotic for the case of $RH_p$ constant close to one was obtained by Bojarski (\cite{Boiarsky:1985}) and Wik (\cite{Wik:1992}). In 1990 Sbordone and D'Apuzzo (see \cite{Sbordone:83} and \cite{DapSbor:90}) found sharp dependence for monotone functions and in 1992 Korenovskii (\cite{Korenovskii:1992}) showed that increasing rearrangements do not change the Reverse H\"{o}lder constant of the weight, expanding results of Sbordone and D'Apuzzo to the weights that are not monotone. In 2008 Vasyunin (see \cite{Vasyunin:08}) presented a new proof of the sharp Gehring lemma using method of Bellman functions. All of the above was done for the case $1< p < \infty$ and in dimension one. Let us state the sharp version of the Gehring Lemma.
\begin{theorem}[Sharp Gehring Lemma ($n=1$, $1<p<\infty$)]
\label{theorem_sharp_Gehr_lemma1} Let $w$ be a weight, $w\in RH_p$ for some $p>1$, then $w\in RH_{p+\delta}$ $\forall \delta < \varepsilon$, where $\varepsilon$ is the root of
\begin{equation}
\label{eq_sharp_gehr_lemma1} \frac{1}{p-1}\log \frac{p+\varepsilon - 1}{\varepsilon} - \log \frac{p+\varepsilon}{p+\varepsilon-1} \; = \; \frac{p}{p-1} \log [w]_{RH_p}.
\end{equation}
\end{theorem}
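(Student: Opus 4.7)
The plan is to follow the Bellman function approach of Vasyunin~\cite{Vasyunin:08}, streamlined by the Monge--Amp\`ere device used in~\cite{SlavinVasyunin} and in our proof of Theorem~\ref{theorem_RH1<Ainfty}. Fix $p>1$ and $Q\geqslant 1$, and for a candidate increment $\delta>0$ introduce the Bellman function
\[
\mathbf{B}(x,y):=\sup\Bigl\{\ave{I}{w^{p+\delta}}\colon \ave{I}{w}=x,\ \ave{I}{w^p}=y,\ [w]_{RH_p}\leqslant Q\Bigr\},
\]
the supremum being taken over all weights $w$ on all intervals $I\subset\R$ that meet the listed constraints. Its natural domain is $\Omega_Q=\{(x,y):x>0,\ x^p\leqslant y\leqslant Q^p x^p\}$. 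The standard bisection argument gives the midpoint concavity inequality
\[
\mathbf{B}\!\left(\tfrac{x_++x_-}{2},\tfrac{y_++y_-}{2}\right)\geqslant \tfrac12\bigl(\mathbf{B}(x_+,y_+)+\mathbf{B}(x_-,y_-)\bigr)
\]
whenever the chord lies inside $\Omega_Q$, while the scaling $w\mapsto\lambda w$ yields the homogeneity $\mathbf{B}(\lambda x,\lambda^p y)=\lambda^{p+\delta}\mathbf{B}(x,y)$.

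Next I would collapse the problem to one real variable. On the extremal foliation, concavity degenerates to equality along a one-parameter family of chords, so the Monge--Amp\`ere relation $\mathbf{B}_{xx}\mathbf{B}_{yy}-\mathbf{B}_{xy}^2=0$ reduces to an ODE for a profile function $f$ of the scale-invariant variable $s=y/x^p\in[1,Q^p]$. The lower boundary $y=x^p$ forces the underlying weight to be constant, giving the Cauchy datum $\mathbf{B}(x,x^p)=x^{p+\delta}$, that is $f(1)=1$. Integrating the ODE explicitly and tracking what happens as $s\to Q^p$ produces a transcendental relation between $\delta$ and $Q$; its threshold form is exactly~(\ref{eq_sharp_gehr_lemma1}). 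For every $\delta<\varepsilon$ one obtains a finite locally concave majorant $\mathbf{B}$ on all of $\Omega_Q$, and then the pointwise-to-average inequality $\ave{I}{w^{p+\delta}}\leqslant \mathbf{B}(\ave{I}{w},\ave{I}{w^p})$ gives the asserted $RH_{p+\delta}$ bound together with an explicit constant.

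Sharpness is obtained by reading the extremal weight off the foliation: up to affine changes of variable one can take $w(t)\sim t^{-\alpha}$ on $(0,1]$ with $\alpha=1/(p+\varepsilon)$, so that $(\ave{I}{w},\ave{I}{w^p})$ sits on the upper edge $y=Q^p x^p$ of $\Omega_Q$; a direct computation recovers the relation $(r-1)^p=Q^p\,(r-p)\,r^{p-1}$ with $r=p+\varepsilon$, which is the closed form of~(\ref{eq_sharp_gehr_lemma1}), and shows $\ave{I}{w^{p+\delta}}=\infty$ the instant $\delta$ exceeds $\varepsilon$.

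The main obstacle in this plan is the boundary analysis at $s=Q^p$: one has to pin down the correct boundary condition coming from weights pushed to the RH${}_p$-saturating edge and to verify that the integration constant of the ODE locks the two boundaries into precisely the identity~(\ref{eq_sharp_gehr_lemma1}); the delicate logarithmic combination there should emerge after rewriting the ODE solution in terms of $r=p+\varepsilon$. A secondary technical point is justifying that concavity together with the lower boundary datum forces $\mathbf{B}$ to coincide with the ODE solution rather than merely majorize it, which is standard for Bellman functions of this type, cf.~\cite{Vasyunin:08}.
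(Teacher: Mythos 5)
The paper itself does not prove Theorem \ref{theorem_sharp_Gehr_lemma1}: it is quoted as a known sharp result (Sbordone--D'Apuzzo, Korenovskii, and the Bellman-function proof of Vasyunin \cite{Vasyunin:08}), and the paper's own contribution is the $p=1$ analogue, Theorem \ref{theorem_sharp_Gehr_lemma2}, proved in Section \ref{s:proof_theorem_Gehr_Lemma_2}. Your proposal is, in effect, a sketch of the cited Vasyunin argument, and it does parallel the template the paper uses for $p=1$: a Bellman function on the domain between the curves $y=x^p$ and $y=Q^px^p$, degenerate concavity along a foliation by tangent lines to the upper boundary, and extremal power weights. Your quantitative anchors check out: for $w(t)=t^{-1/(p+\varepsilon)}$ on $[0,1]$ one gets $[w]_{RH_p}=\frac{p+\varepsilon-1}{p+\varepsilon}\bigl(\frac{p+\varepsilon}{\varepsilon}\bigr)^{1/p}$, and taking logarithms and multiplying by $\frac{p}{p-1}$ gives exactly \eqref{eq_sharp_gehr_lemma1}; your closed form $(r-1)^p=Q^p(r-p)r^{p-1}$ with $r=p+\varepsilon$ is an equivalent rewriting, and divergence of $\ave{I}{(w^{p+\delta})}$ at $\delta\geqslant\varepsilon$ gives sharpness.

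That said, as submitted this is a plan rather than a proof. The positive direction (every $w$ with $[w]_{RH_p}\leqslant Q$ lies in $RH_{p+\delta}$ for $\delta<\varepsilon$) requires actually exhibiting the locally concave majorant: integrating the Monge--Amp\`ere/ODE along the foliation, fixing the boundary behaviour at $y=Q^px^p$ so that finiteness fails precisely at $\delta=\varepsilon$, verifying negative semidefiniteness of the Hessian on all of $\Omega_Q$, and then running the truncation-plus-splitting-plus-limit argument (the analogues of Lemma \ref{lemma_technical1}, Lemma \ref{lemma1-2} and the chord-in-$\Omega_{Q_1}$ device) to convert local concavity into the integral inequality. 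These steps are exactly what you flag as ``the main obstacle,'' and in \cite{Vasyunin:08} --- as in the paper's own proof of Theorem \ref{theorem_Bellman} for $p=1$ --- they constitute the bulk of the work. So: right approach, correct endpoint relation and extremal, but the core construction and verification are deferred, so the argument is incomplete as it stands.
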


In the following theorem we show that with the $RH_1$ constant defined as above, the Gehring Lemma works for $p=1$ and obtain the sharp dependence of $\varepsilon$ on the $RH_1$ constant of the weight in dimension one.
\begin{theorem}[Main result 2 : Sharp Gehring Lemma ($n=1$, $p=1$)]
\label{theorem_sharp_Gehr_lemma2} Suppose $w\in RH_1$, then $w\in RH_{1+\ep}$, $0<\ep<\varepsilon_-$, where $\varepsilon_-$ is the smallest solution of the equation
\begin{equation}
\label{eq_sharp_gehr_lemma2} \frac{1}{t} - \log \left( \frac{1}{t} + 1 \right) \; = \; [w]_{RH_1}.
\end{equation}
This result is sharp in a sense that for any constant $C$ there exists a weight $w\in RH_1$ with $[w]_{RH_1}=C$ such that $w$ does not belong to $RH_{1+\varepsilon_-}$ with $\varepsilon_-$ defined by \ref{eq_sharp_gehr_lemma2}.
\end{theorem}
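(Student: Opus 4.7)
The plan is to adapt the Bellman-function method used in the case $1<p<\infty$ (see \cite{Vasyunin:08}) to the endpoint $p=1$. First I would introduce two Bellman variables $x_1=\ave{I}{w}$ and $x_2=\ave{I}{(w\log w)}$ and define
$$
B(x_1,x_2):=\sup\left\{\ave{I}{w^{1+\ep}}\colon \ave{I}{w}=x_1,\ \ave{I}{(w\log w)}=x_2,\ [w]_{RH_1}\leqslant Q\right\}
$$
on the domain $\Omega_Q=\{(x_1,x_2)\colon x_1>0,\ 0\leqslant x_2-x_1\log x_1\leqslant Qx_1\}$. The lower parabolic boundary $x_2=x_1\log x_1$ corresponds by Jensen to constant weights, forcing $B(x_1,x_1\log x_1)=x_1^{1+\ep}$. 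A standard interval-splitting argument yields the concavity inequality $B(\alpha y+(1-\alpha)z)\geqslant \alpha B(y)+(1-\alpha)B(z)$ for any $y,z\in\Omega_Q$ and $\alpha\in[0,1]$.

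Second, the scaling $w\mapsto \lambda w$ preserves the quantity $\tau:=x_2/x_1-\log x_1$ while rescaling both $x_1^{1+\ep}$ and $\ave{I}{w^{1+\ep}}$ by $\lambda^{1+\ep}$. This forces the ansatz $B(x_1,x_2)=x_1^{1+\ep}F(\tau)$, $\tau\in[0,Q]$, $F(0)=1$, reducing the problem to the scalar function $F$. Following the philosophy of \cite{Vasyuninrus:04, SlavinVasyunin}, the sharp $B$ is the least concave majorant of its boundary value, characterised by the homogeneous Monge--Amp\`ere equation $\det D^2 B=0$. Substituting the ansatz (or equivalently eliminating one variable through the homogeneity relation $x_1 B_{x_1}+(x_1+x_2)B_{x_2}=(1+\ep)B$) produces an explicit nonlinear ODE for $F$ with initial data $F(0)=1$.

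Third, I would integrate this ODE and track its maximal interval of existence in $\tau$. For small $\ep$ the solution is smooth and finite on $[0,Q]$ and furnishes a concave majorant of the boundary data; at a critical value $\ep=\ep_-(Q)$ the solution blows up precisely when $\tau$ reaches $Q$, and for $\ep>\ep_-$ the blow-up occurs strictly before $Q$, permitting $\ave{I}{w^{1+\ep}}$ to be infinite. Writing the blow-up condition explicitly recovers the implicit relation $\frac{1}{\ep}-\log(1+\frac{1}{\ep})=Q$, i.e.\ \eqref{eq_sharp_gehr_lemma2}. Combined with the usual dyadic Bellman induction $\ave{I}{w^{1+\ep}}\leqslant B(\ave{I}{w},\ave{I}{(w\log w)})$, this gives the upper bound in the theorem for every $\ep<\ep_-$.

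Finally, for sharpness I would test against the one-parameter family of power weights $w(x)=(1-a)x^{-a}$ on $(0,1)$ with $a=1/(1+\ep)$. A direct computation gives $\ave{(0,1)}{w}=1$ and $\ave{(0,1)}{(w\log w)}=\log(1-a)+a/(1-a)=\frac{1}{\ep}-\log(1+\frac{1}{\ep})$, so $[w]_{RH_1}=Q$ exactly when $\ep=\ep_-$, while $\ave{(0,1)}{w^{1+\ep}}=(1-a)^{1+\ep}/(1-a(1+\ep))$ diverges as $a(1+\ep)\to 1^-$, showing $w\notin RH_{1+\ep_-}$. The main obstacle I anticipate is rigorously justifying the upper bound: one must verify that the candidate obtained from the Monge--Amp\`ere ODE is genuinely concave on all of $\Omega_Q$ (not merely a local extremal) so that the dyadic Bellman induction can actually be run, and that a limiting procedure lets one pass from simple dyadic step functions to arbitrary admissible weights. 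A secondary delicate point is the matching at the degenerate boundary $\tau=0$: unlike in the $p>1$ case where the boundary is polynomial, the logarithmic boundary condition here dictates a specific choice of integration constant when solving the ODE.
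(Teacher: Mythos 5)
Your proposal is essentially the paper's own proof: the same Bellman function $\sup\{\ave{I}{w^{1+\varepsilon}}\}$ on the same domain $\{x\log x\leqslant y\leqslant x\log x+Qx\}$, the candidate found from the homogeneous Monge--Amp\`ere/tangent-line structure (the paper writes it explicitly as $B(x,y)=\frac{v^{\varepsilon}}{1+\varepsilon-\gamma_+\varepsilon}\left(x(1+\varepsilon)-\varepsilon\gamma_+ v\right)$ with $v$ given by the tangent through $(x,y)$ to $\Gamma_Q$, which is exactly of your form $x^{1+\varepsilon}F(\tau)$), the same splitting/truncation/limiting scheme for the dyadic induction, and the same family of power weights for sharpness, with the same critical relation $\frac{1}{\varepsilon}-\log\bigl(1+\frac{1}{\varepsilon}\bigr)=Q$. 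The one step you assert rather than prove is that the extremal power weight satisfies $[w]_{RH_1}\leqslant Q$ for \emph{every} subinterval and not just on $(0,1)$; this is exactly the paper's Lemma \ref{podpirprimer}, which requires a genuine (if elementary) monotonicity calculation, so it should be carried out rather than read off from the full-interval averages.
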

Proof of this theorem can be found in Section \ref{s:proof_theorem_Gehr_Lemma_2}.

There are no known extensions of the above sharp results to the higher dimension. The nonsharp dependence of $\varepsilon$ on $p$ and the $RH_p$ constant of the weight $w$ is not hard to trace even in more general case of $\R^n$. Following \cite{Grafakos:03}, \cite{Stein:93} or \cite{GarciaCuervaRubioDeFrancia:85} one can easily show
\begin{equation}
\label{eq_1.8} w\in RH_1 \;\; \Rightarrow \;\; w\in RH_{1+\varepsilon} \;\; with \;\; \varepsilon = \frac{\log 4}{n \log 2 + 8[w]_{RH_1}},
\end{equation}
but this result is far from being sharp. We include the proof of (\ref{eq_1.8}) in Section \ref{s:proof_theorem_Gehr_p=1_n} for completeness.

{\bf 1-Gehring v.s. p-Gehring.} In the end of this section we will show that $p$-Gehring (unfortunately not a sharp one) for any $p>1$ follows from the $1$-Gehring in dimension $n$. We will (except for one step where we use $1$-Gehring Lemma) follow Iwaniec, see \cite{IwVer}.

We start with $p>1$ and $w\in RH_p$, i.e. for any interval $I\subset \R$
$$
\left(\ave{I}{(w^p})\right)^{\frac{1}{p}} \; \leqslant \; [w]_{RH_p} \ave{I}{w}.
$$
We would like to show that $w\in RH_{p+\delta}$ for some $\delta>0$. Trivially, we have pointwise inequality for the Hardy-Littlewood Maximal function $M$
$$
\left( M(w^p) \right)^{\frac{1}{p}} \; \leqslant \; [w]_{RH_p} M(w).
$$
Since by our assumption $w\in L_p(I)$, $M(w)$ is in $L_p(I)$ as well, so by the above inequality $M(w^p) \in L_1$. By the famous result of Stein \cite{Stein:1969} it implies that $w^p \in L\log L(I)$ and
$$
\ave{I}{\left( w^p \log \left( e+\frac{w^p}{\ave{I}{(w^p)}} \right) \right)} \; \leqslant \; 2^n \; \ave{I}{(M(w^p))}
$$
which, by Weiner, 
is bounded from above by
$$
\leqslant \; 2^n [w]_{RH_p}^p 3^n \frac{p}{p-1} 2^p \; \ave{I}{(w^p)}.
$$
So, by the above, $w^p \in RH_1$ with
$$
[w]_{RH_1} = \sup_{I\subset \R} \ave{I}{\left( \frac{w^p}{\ave{I}{(w^p)}} \log \left( e+\frac{w^p}{\ave{I}{(w^p)}} \right) \right)} \leqslant 6^n [w]_{RH_p}^p 2^p \frac{p}{p-1}.
$$
Now all we need is to apply 1-Gehring Lemma, that there exists an $\varepsilon >0$ such that $w^p\in RH_{1+\varepsilon}$, which trivially implies that $w\in RH_{p+\delta}$ with $\delta=p\varepsilon$.

{\bf Some useful technical Lemmas.} We also prove two technical lemmas, which, we think, can be interesting on their own. The first lemma is the $RH_1$ case missing in \cite{RVV:2010}, where the analogues were shown for $RH_p$ and $A_q$ for $1<p<\infty$ and $1<q\leqslant \infty$.

\begin{lemma}\label{lemma_technical1}
Take a function $w\in RH_1$ and define
$$
w_n(t)=\begin{cases} \frac{1}{n}, &w(t)\leqslant \frac{1}{n}\\
w(t), &\frac{1}{n}\leqslant w(t)\leqslant n \\
n, &w(t)\geqslant n

                    \end{cases}.
$$
%

Then
$$
[w_n]_{RH_1}\leqslant [w]_{RH_1}.
$$

Moreover, the same holds for any function $w\in A_\infty$ with replacing $[.]_{RH_1}$ by $[.]_{\infty}$.
\end{lemma}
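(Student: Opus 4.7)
The plan is to write $w_n=\min(\max(w,1/n),n)$ as the composition of two one-sided truncations and show that each of them decreases, on every fixed interval $J$, the quantity
\[
\Psi(u,J):=\frac{\ave{J}{(u\log u)}}{\ave{J}{u}}-\log \ave{J}{u},
\]
so that passing to the supremum over $J$ gives $[w_n]_{RH_1}\leqslant [w]_{RH_1}$. The $A_\infty$-claim is proved by the same template with $\tilde\Psi(u,J):=\log \ave{J}{u}-\ave{J}{\log u}$ in place of $\Psi$.

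For the upper truncation I fix $J$ and consider the family $w_\lambda:=\min(w,\lambda)$, $\lambda\in[n,\infty)$. Writing $\mu(s):=|\{t\in J\colon w(t)>s\}|$, the layer-cake formula gives
\[
f(\lambda):=\int_J w_\lambda\,dx=\int_0^\lambda \mu(s)\,ds,\qquad g(\lambda):=\int_J w_\lambda\log w_\lambda\,dx=\int_0^\lambda (1+\log s)\mu(s)\,ds,
\]
so $f,g$ are absolutely continuous with $f'(\lambda)=\mu(\lambda)$ and $g'(\lambda)=(1+\log\lambda)\mu(\lambda)$. A direct calculation on $\Psi(w_\lambda,J)=g/f-\log(f/|J|)$ then produces, after the $\log\lambda$ contributions cancel,
\[
\frac{d}{d\lambda}\Psi(w_\lambda,J)=\frac{\mu(\lambda)}{f(\lambda)^2}\int_J w_\lambda\log\frac{\lambda}{w_\lambda}\,dx\geqslant 0,
\]
because $w_\lambda\leqslant\lambda$ pointwise. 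Integrating from $\lambda=n$ up to $+\infty$, and using dominated convergence to recover $\Psi(w,J)$ in the limit, yields $\Psi(\min(w,n),J)\leqslant \Psi(w,J)$.

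The lower truncation $w_\beta:=\max(w,\beta)$, $\beta\in[0,1/n]$, is handled by the mirror computation. With $\nu(\beta):=|\{t\in J\colon w(t)\leqslant\beta\}|$ one obtains $f'(\beta)=\nu(\beta)$ and $g'(\beta)=(1+\log\beta)\nu(\beta)$, and exactly the same algebra gives
\[
\frac{d}{d\beta}\Psi(w_\beta,J)=\frac{\nu(\beta)}{f(\beta)^2}\int_J w_\beta\log\frac{\beta}{w_\beta}\,dx\leqslant 0,
\]
now because $w_\beta\geqslant\beta$. Composing the two one-sided inequalities gives $\Psi(w_n,J)\leqslant \Psi(w,J)$ on each $J$, which is what we wanted. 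For the $A_\infty$-version one replaces $g$ by $h(\lambda):=\int_J\log w_\lambda\,dx$, for which $h'(\lambda)=\mu(\lambda)/\lambda$ (resp.\ $h'(\beta)=\nu(\beta)/\beta$); the same kind of cancellation reduces the derivative of $\tilde\Psi(w_\lambda,J)$ to a nonnegative multiple of $|J|\lambda-f(\lambda)$, and that of $\tilde\Psi(w_\beta,J)$ to a nonpositive multiple of $|J|\beta-f(\beta)$, the signs being again dictated by $w_\lambda\leqslant\lambda$ and $w_\beta\geqslant\beta$.

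The main obstacle is really only bookkeeping: establishing the layer-cake representation of $g$ (the integrand $x\log x$ changes sign at $1$, so one writes $u\log u=\int_0^u(1+\log s)\,ds$ and checks integrability of $(1+\log s)\mu(s)$ near $s=0$) and the absolute continuity of $f,g,h$ in the cutoff, which legitimizes the fundamental theorem of calculus invoked above. Both are standard once one observes that $w_\lambda$ and $w_\beta$ are monotone in the cutoff, with distribution functions inherited directly from that of $w$ on $J$.
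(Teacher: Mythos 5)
Your proof is correct, and it is worth noting that the paper itself never writes out a proof of Lemma \ref{lemma_technical1}: it only remarks that the argument is very similar to the cut-off lemmas for $RH_p$ and $A_p$ in \cite{RVV:2010} and \cite{Vasyunin:08}, which likewise work interval by interval with one-sided truncations. Your one-parameter monotonicity argument is a self-contained proof in that same spirit. The computations check out: with $f'(\lambda)=\mu(\lambda)$ and $g'(\lambda)=(1+\log\lambda)\mu(\lambda)$ one indeed gets $\tfrac{d}{d\lambda}\Psi(w_\lambda,J)=\tfrac{\mu(\lambda)}{f(\lambda)^2}\int_J w_\lambda\log\tfrac{\lambda}{w_\lambda}\,dx\geqslant 0$, and the mirror formula for $\max(w,\beta)$ is correct; for the $A_\infty$ variant the derivatives are $\tfrac{\mu(\lambda)}{\lambda|J|f(\lambda)}\bigl(\lambda|J|-f(\lambda)\bigr)$ and $\tfrac{\nu(\beta)}{\beta|J|f(\beta)}\bigl(\beta|J|-f(\beta)\bigr)$, so in the lower truncation it is a \emph{nonnegative} factor times the nonpositive quantity $\beta|J|-f(\beta)$ (your phrase \emph{nonpositive multiple} is a slip of wording, not of mathematics). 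Two points deserve to be made explicit when you write this up: (i) besides the limit $\lambda\to\infty$ you also need the endpoint $\beta\to 0^+$, i.e. $\Psi(w_\beta,J)\to\Psi(w,J)$ and $\tilde\Psi(w_\beta,J)\to\tilde\Psi(w,J)$; this follows from dominated convergence since $|w_\beta\log w_\beta|\leqslant |w\log w|+e^{-1}$ and $|\log w_\beta|\leqslant |\log w|$ for $\beta\leqslant 1$, and the hypotheses give $w\log w\in L^1(J)$ (resp. $\log w\in L^1(J)$) on every interval, which is also what legitimizes the absolute continuity of $f,g,h$ and the $\lambda\to\infty$ limit; (ii) for the $A_\infty$ conclusion, add the one-line remark that monotonicity of $\tilde\Psi(\cdot,J)=\log\ave{J}{u}-\ave{J}{(\log u)}$ on each $J$ gives $[w_n]_{\infty}\leqslant [w]_{\infty}$ after exponentiating, since $\exp$ is increasing. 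With these small additions your argument is complete and fills in a proof the paper only cites.
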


Next lemma is the $RH_1$ analogue of Vasyunin's lemmas from \cite{Vasyuninrus:04} and \cite{Vasyunin:08}. Lemma \ref{theorem_Vasyunin} is taken from these articles.
\begin{lemma}\label{lemma1-2}
Fix $Q_1>Q>0$ and denote $\Om_{Q_1}=\{(x,y)\colon x\log(x)\leqslant y\leqslant x\log(x)+Q_1 x\}$. Then for every $w\in RH_1$, $[w]_{RH_1}<Q$, there are two intervals $I^{+}$ and $I^{-}$ such that $I=I^{-}\cup I^{+}$ and if $x^{\pm}=\left(\av{w}{I^{\pm}}, \av{(w\log(w))}{I^{\pm}}\right)$ then $[x^{-}, x^{+}]\subset \Om_{Q_1}$.
Also the parameters $\alpha^{\pm}=\frac{|I^{\pm}|}{|I|}$ can be taken separated from $0$ and $1$ uniformly with respect to $w$.
\end{lemma}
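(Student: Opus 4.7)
My plan is to exploit the fact that $x := (\av{w}{I}, \av{(w\log(w))}{I})$ sits strictly inside $\Om_{Q_1}$, with a vertical margin of at least $(Q_1 - Q)\av{w}{I}$ below the upper boundary $y = x\log x + Q_1 x$. By scale invariance of the $RH_1$ condition under $w \mapsto \lambda w$ (since $w/\av{w}{J}$ is itself scale invariant), I first normalize so that $\av{w}{I} = 1$; then $x = (1, y_0)$ with $0 \leqslant y_0 \leqslant Q$. Next, invoking Korenovskii's rearrangement theorem, whose extension to $p = 1$ follows by the limiting formula (\ref{RH1aslimitRHp}), I reduce to the case where $w$ is non-decreasing on $I = [0,1]$. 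The candidate splits now form the one-parameter family $I^{-}(\alpha) = [0,\alpha]$, $I^{+}(\alpha) = [\alpha,1]$, $\alpha \in (0,1)$, and the endpoints $x^{\pm}(\alpha) = (\av{w}{I^\pm(\alpha)}, \av{(w\log(w))}{I^\pm(\alpha)})$ automatically lie in $\Om_Q$ because $[w]_{RH_1} < Q$ is inherited by every subinterval.

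For any such split, the lower constraint $y \geqslant x\log x$ along the chord $[x^-, x^+]$ is automatic: convexity of $\phi(x) := x\log x$ gives $(1-t)\phi(x^-) + t\phi(x^+) \geqslant \phi((1-t)x^- + tx^+)$, while Jensen applied on $I^{\pm}$ gives $y^\pm \geqslant \phi(x^\pm)$. Hence the real content of the lemma is the upper inequality $y(t) \leqslant \phi(x(t)) + Q_1 x(t)$ for every $t \in [0,1]$, where $x(t) := (1-t)x^- + tx^+$ and $y(t) := (1-t)y^- + ty^+$. Substituting $y^\pm \leqslant \phi(x^\pm) + Q x^\pm$, which comes from $x^\pm \in \Om_Q$, this reduces to the convexity-defect estimate
$$
(1-t)\phi(x^-(\alpha)) + t \phi(x^+(\alpha)) - \phi(x(t)) \;\leqslant\; (Q_1 - Q)\, x(t), \qquad t \in [0,1].
$$
Since $\phi''(x) = 1/x$, a second-order Taylor expansion bounds the left-hand side by a multiple of $t(1-t)(x^+(\alpha) - x^-(\alpha))^2 / \min(x^-(\alpha), x^+(\alpha))$, so the display will follow once I find an $\alpha_{\ast}$ for which the ratio $x^+(\alpha_{\ast})/x^-(\alpha_{\ast})$ is close enough to $1$.

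It remains to produce such an $\alpha_{\ast}$ with $\alpha_{\ast} \in [c_0, 1 - c_0]$ for some $c_0 = c_0(Q, Q_1) > 0$. My plan is a dichotomy. If the equal split $\alpha = 1/2$ already yields $x^+(1/2)/x^-(1/2) \leqslant 1 + 2\sqrt{2(Q_1 - Q)}$, take $\alpha_{\ast} = 1/2$ and close with the previous estimate. Otherwise $w$ is distributed very unevenly across the two halves, but the bound $\av{w\log(w/\av{w}{I})}{I} \leqslant Q$ prevents $w$ from concentrating too sharply, so I would locate $\alpha_{\ast}$ via the intermediate-value theorem applied to the continuous map $\alpha \mapsto x^+(\alpha)/x^-(\alpha)$, noting that $x^-(\alpha) \to w(0^+)$ as $\alpha \to 0$ and $x^+(\alpha) \to w(1^-)$ as $\alpha \to 1$.

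The main obstacle I anticipate is precisely this last step: converting the integral constraint $[w]_{RH_1} < Q$ into a $w$-independent, quantitative choice of $c_0$. I would handle it by showing that if the ratio $x^+(\alpha)/x^-(\alpha)$ stayed above the threshold $1 + 2\sqrt{2(Q_1 - Q)}$ for every $\alpha \in [c_0, 1 - c_0]$, then integrating the corresponding lower bound against the monotone distribution of $w$ would force $\av{(w\log w)}{I} - \av{w}{I}\log\av{w}{I}$ to exceed $Q$, contradicting the $RH_1$ assumption and thereby pinning down an admissible $\alpha_{\ast}$.
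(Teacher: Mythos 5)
Your plan has a fatal structural gap, and it is in the core step, not in the technicalities you flag at the end. After replacing $y^{\pm}$ by the worst-case values $\phi(x^{\pm})+Qx^{\pm}$ (i.e.\ pushing both endpoints up to the boundary curve $y=x\log x+Qx$), the chord stays below $y=x\log x+Q_1x$ only if the ratio $x^{+}/x^{-}$ is within $1+O(\sqrt{Q_1-Q})$ of $1$, which is exactly your criterion. But a split with that property, with splitting fraction bounded away from $0$ and $1$ --- or indeed with any splitting fraction at all --- need not exist. Take the paper's own extremal weight with $\gamma_+=2$, namely $w(t)=t^{-1/2}$ on $I=[0,1]$, for which $[w]_{RH_1}=1-\log 2$; choose $Q=0.31$, $Q_1=0.32$. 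For the split $I^{-}=[0,\alpha]$, $I^{+}=[\alpha,1]$ one computes $m_{I^{-}}w=2\alpha^{-1/2}$ and $m_{I^{+}}w=2(1+\sqrt{\alpha})^{-1}$, so the ratio of the two averages equals $1+\alpha^{-1/2}\geqslant 2$ for every $\alpha\in(0,1)$, far above your threshold $1+2\sqrt{2(Q_1-Q)}\approx 1.28$. Thus your dichotomy cannot be closed: the intermediate-value step has nothing to converge to, and the contradiction you propose in the last paragraph is false, since this weight keeps the ratio above the threshold for all $\alpha$ while satisfying $[w]_{RH_1}<Q$. The lemma is nevertheless true for this weight (the chord for the midpoint split runs along a tangent line to the convex curve $y=x\log x+Qx$ and hence stays inside $\Omega_Q\subset\Omega_{Q_1}$); what your reduction discards is precisely the coupling between the two endpoints through $w$ itself --- when the averages are very different, the two points cannot both sit near the upper boundary. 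The paper's argument (following Vasyunin \cite{Vasyunin:08}, \cite{RVV:2010}) exploits exactly this: start from the midpoint split and move the splitting point, tracking how the chord turns until it enters $\Omega_{Q_1}$, with a quantitative estimate showing the splitting fraction stays in a fixed compact subinterval of $(0,1)$.

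A secondary but also genuine problem is the rearrangement reduction. The lemma asserts something about the subinterval averages of the \emph{given} weight $w$; the increasing rearrangement preserves averages over all of $I$ but not over $I^{\pm}$, so producing a good split for the rearranged weight gives no split for $w$ (and in the Bellman iteration the lemma must be applied to $w$ itself on each generation of subintervals, since one needs $x^{n}(t)\to(w(t),w(t)\log w(t))$ a.e.). Korenovskii's theorem, even granting its $p=1$ limit, therefore cannot be used to assume monotonicity here. The parts of your write-up that are sound are the normalization $m_Iw=1$ and the observation that the lower constraint $y\geqslant x\log x$ along the chord is automatic from convexity of $x\log x$ and Jensen; the upper constraint is indeed the whole content, but it must be handled with an argument that keeps track of where $x^{-}$ and $x^{+}$ actually lie in $\Omega_Q$, not only that they lie in $\Omega_Q$.
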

\begin{lemma}
\label{theorem_Vasyunin} Fix $Q_1>Q>0$ and denote $\Om_{Q_1}=\{(x,y)\colon 1\leqslant xe^{-y}\leqslant Q_1\}$.
Then for every $w\in A_\infty$, $[w]_{\infty}<Q$, there are two intervals $I^{+}$ and $I^{-}$ such that $I=I^{-}\cup I^{+}$ and if $x^{\pm}=\left(\av{w}{I^{\pm}}, \av{(\log(w))}{I^{\pm}}\right)$ then $[x^{-}, x^{+}]\subset \Om_{Q_1}$.
Also the parameters $\alpha^{\pm}=\frac{|I^{\pm}|}{|I|}$ can be taken separated from $0$ and $1$ uniformly with respect to $w$.
%
\end{lemma}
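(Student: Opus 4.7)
The plan is to produce the splitting by a continuity argument along the one-parameter family of left/right cuts of $I$, after first normalizing $w$ to be monotone. By Korenovskii's rearrangement theorem \cite{Korenovskii:1992}, the $A_{\infty}$ constant is preserved under the decreasing rearrangement of $w$ on $I$, so I may assume $w$ is non-increasing on $I=(a,b)$. For $t\in(0,1)$ set $I^{-}_{t}=(a,a+t\mdl{I})$ and $I^{+}_{t}=(a+t\mdl{I},b)$, and put
$$
P^{\pm}(t):=\bigl(\ave{I^{\pm}_{t}}{w},\; \ave{I^{\pm}_{t}}{\log w}\bigr),\qquad P(I):=\bigl(\ave{I}{w},\; \ave{I}{\log w}\bigr).
$$
Linearity of averages gives $P(I)=t\,P^{-}(t)+(1-t)\,P^{+}(t)$, so $P(I)$ always lies on the chord joining $P^{-}(t)$ and $P^{+}(t)$.

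Since the $A_{\infty}$ constant is a supremum over all subintervals, $[w|_{I^{\pm}_{t}}]_{\infty}\leqslant[w]_{\infty}<Q$, so both endpoints $P^{\pm}(t)$ automatically belong to $\Om_{Q}\subset\Om_{Q_{1}}$; only the chord itself needs attention. The upper boundary of $\Om_{Q_{1}}$ is the graph of the concave function $y=\log x$, and the region below it is convex, so the chord cannot escape through the top. The lower boundary $y=\log x-\log Q_{1}$ is also a concave graph, but $\Om_{Q_{1}}$ lies \emph{above} it, so the chord could in principle dip below it; this is the only real content of the lemma.

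To rule this out, I would write $F(x,y):=xe^{-y}$ and observe that $F(P(I))<Q$ holds at the fixed interior point of the chord, while $F\leqslant Q_{1}$ is exactly the condition for staying inside $\Om_{Q_{1}}$. A first-order expansion of $F$ along the chord shows that the excursion of $F$ above its value at $P(I)$ is controlled by the Euclidean length of the chord times the Lipschitz constant of $F$ on a bounded subset of $\Om_{Q}$. The slack $Q_{1}-Q>0$ therefore reduces the task to bounding $|P^{+}(t)-P^{-}(t)|$ from above for some non-degenerate $t$. I would argue by contradiction: if, for every $t\in[\eta,1-\eta]$, this chord length stayed too large, the monotonicity of $w$ together with $[w]_{\infty}<Q$ would force one of $P^{\pm}(t)$ out of the compact region of $\Om_{Q}$ that is reachable from $P(I)$, contradicting the $A_{\infty}$ bound and producing the uniform $\eta=\eta(Q,Q_{1})>0$ asserted in the lemma.

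The main obstacle is the non-convexity of $\Om_{Q_{1}}$: two points of $\Om_{Q}$ are not in general joined by a chord lying in $\Om_{Q_{1}}$, so the slack $Q_{1}-Q$ has to be cashed in quantitatively to guarantee simultaneously a valid chord and a splitting ratio bounded away from $0$ and $1$. Converting the strict inequality $[w]_{\infty}<Q<Q_{1}$ into this uniform quantitative separation from the lower boundary $xe^{-y}=Q_{1}$ is where the monotonicity assumption from Step 1 really earns its keep.
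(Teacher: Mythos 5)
Your setup is fine as far as it goes: both endpoints $x^{\pm}$ lie in $\Om_{Q}$ automatically, the whole-interval point $(\ave{I}{w},\ave{I}{(\log w)})$ lies on the chord, and since $\{y\leqslant \log x\}$ is convex only the lower boundary $xe^{-y}=Q_1$ is in danger — this matches the moving-cut heuristic the paper itself gives (the paper does not prove the lemma; it cites Vasyunin and defers the uniform bound on the ratios to his computation). But the proposal has two genuine gaps. The first is the opening reduction to monotone $w$ via rearrangement: the lemma requires a splitting of $I$ whose endpoints are averages of the \emph{original} $w$ over $I^{\pm}$. Rearrangement preserves the distribution of $w$, hence the Bellman point of the whole interval, but it changes the averages over subintervals, so a splitting produced for the rearranged weight yields no admissible splitting for $w$ and cannot be transported back. (Korenovskii's theorem also concerns the Reverse H\"older constant, not the $A_\infty$ characteristic as defined here, so even the invariance claim would need separate justification.) The genuine proof does not need monotonicity at all.

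The second and more serious gap is that the actual content of the lemma — that for some splitting ratio in $[\eta,1-\eta]$ with $\eta=\eta(Q,Q_1)$ independent of $w$ the chord never dips below $xe^{-y}=Q_1$ — is not proved, only asserted through a first-order/Lipschitz sketch and an unspecified contradiction. That mechanism cannot work as stated: the problem is invariant under $w\mapsto \lambda w$, which leaves $xe^{-y}$ unchanged while dilating $x$ and translating $y$, so the Euclidean length of the chord is not an invariant quantity and is not controlled by $[w]_{\infty}<Q$; long chords are perfectly admissible (they just must ride above the lower boundary), and the slack $Q_1-Q$ does not convert into a bound on chord length. What must be estimated is by how much, multiplicatively, $xe^{-y}$ can exceed $Q$ along a segment whose endpoints satisfy $xe^{-y}\leqslant Q$, as a function of where the fixed point $(\ave{I}{w},\ave{I}{(\log w)})$ sits on that segment, and then one must show the cut can be chosen so that this position is nondegenerate uniformly in $w$. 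Your contradiction step (``one of $P^{\pm}(t)$ is forced out of the compact region of $\Om_Q$ reachable from $P(I)$'') neither defines such a region nor performs this estimate; it is exactly the technical calculation in Vasyunin's paper that the authors refer the reader to, so the proposal as written leaves the heart of the lemma unproven.
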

Proofs of Lemma \ref{lemma_technical1} and Lemma \ref{lemma1-2} are very similar to the proofs of their $RH_p$ analogues from \cite{RVV:2010} and \cite{Vasyunin:08}.

We would like, however, to give a heuristic idea why these lemmata are true.
Fix $Q_1>Q$ and take a weight $w$, such that $[w]_{\infty}\leqslant Q$. First we take intervals $I_{\pm}$, such that $|I_\pm|=\frac{1}{2}|I|$. If the line segment, described above, is in $\Om_{Q_1}$, then we stop. If no, we start enlarging $I_+$. The line segment, which connects $(x_-, y_-)$ and $(x_+, y_+)$ starts turning and finally gets into $\Om_{Q_1}$.

The only detail is that the parameters $\frac{|I_{\pm}|}{|I|}$ can be chosen bounded away from $0$ and $1$, independently on $w$. This is a technical calculation, and we refer the curious reader to the paper \cite{Vasyunin:08}.

\bigskip

{\bf{Acknowledgements}}

Authors are grateful to A. Volberg for useful suggestions in proving Theorem \ref{theorem_RH1<Ainfty} and to V. Vasyunin for useful discussions. We would also like to thank Steve Hofmann and Chema Martell for suggestions for Section \ref{s: PDE}.

We want to thank Sergei Treil for useful discussions about the mathematical part of the paper and about the way to write its several parts.

Finally, we would like to express our gratitude to C. Thiele, I. Uriarte-Tuero and A. Volberg for organizing the Summer School 2010 in UCLA, where this paper was originated and C. P\'{e}rez and R. Esp\'{\i}nola for organizing the Summer School 2011 in Seville, where we finished this paper.

\section{Applications.}
\label{s: applications}

\subsection{Dirichlet problem for elliptic PDE's}
\label{s: PDE}

In this section we will implicitly follow \cite{HoMa:2010}. This is the reason we will work with $\R_+^{n+1}$.

We start with real symmetric second order elliptic operator
\begin{equation}\label{defL}L f (X) : = - div A(X) \nabla f(X) , \;\;\; X \in \R^{n+1}_+,\end{equation}
with $A(X) = \left(a_{i,j}(X)\right)_{1 \leqslant i,j \leqslant n+1}$ being real, symmetric $(n+1) \times (n+1)$ matrix such that $a_{i,j} \in L_\infty (\R^{n+1}_+)$ for $1 \leqslant i,j \leqslant n+1$, and $A$ is uniformly elliptic, that is, there exists $0<\lambda \leqslant 1$ such that
$$
\lambda \mdl{\xi}^2 \leqslant  A(X) \xi \cdot \xi \leqslant \lambda^{-1} \mdl{\xi}^2
$$
for all $\xi \in \R^{n+1}$ and almost every $X \in \R_+^{n+1}$.

If $f$ is a continuous function on $\R^n$, then there exists a unique function $u$, continuous on ${{\R^{n+1}_+}}$, so that $L u = 0$ in $\R^{n+1}_+$ and $u = f$ on $\R^n$. Then for a point $X_0 \in \R^{n+1}_+$ mapping $f \in {\mathcal{C}} (\R^n) \rightarrow u(X_0)$ is a positive linear functional so that there exists a unique nonnegative measure $\omega^{X_0}$ on $\R^n$ such that for every $f \in {\mathcal{C}} (\R^n)$,
$$
\int_{\R^n} f d \omega^{X_0} = u(X_0).
$$
This measure $\omega^{X_0}$ is called the harmonic measure associated to $L$. Let us fix the point $X_0$ and drop the index $\omega = \omega^{X_0}$. It is often important for applications to know whether or not $\omega$ is absolutely continuous with respect to the Lebesgue (surface) measure $dx$ on $\R^n$. If this is the case, it is also of interest to know how nice the Radon-Nikodym derivative $\kappa =\frac{d \omega}{dx}$ (the Poisson kernel) is. It is a well-known fact, that Dirichlet problem for $L$ is solvable in $L_{p^\prime}, \frac{1}{p} + \frac{1}{p^\prime}=1$, if and only if $\kappa \in RH_p$ (for precise statement of the theorem see \cite{HoMa:2010}, \cite{FeffermanKenigPipher:91}, or \cite{Kenig:91}).

According to Caffarelli, Fabbes, and Kenig \cite{CFK:1981} there exist elliptic operators $L$ of form (\ref{defL}) such that the measure $\omega$ associated to $L$ is not absolutely continuous with respect to the Lebesgue measure $dx$. Later, Fabes, Jerison, and Kenig showed in \cite{FJK:1984} that if matrix $A(X) = \left(a_{i,j}(X)\right)_{1 \leqslant i,j \leqslant n+1}$ of our operator $L$ has continuous entries on ${\R^{n+1}_+}$ and the modulus of continuity is good enough,
then $\omega$ is absolutely continuous with respect to the Lebesgue (surface) measure $dx$, and, moreover its Radon-Nikodym derivative $\kappa$ belongs to the Reverse Holder class $RH_2$. Then in \cite{Dahlberg:1986} Dahlberg extended this to the following result for the solvability of $L$ in $L_{p^\prime}$ in the case when $L$ is a small perturbation of a solvable operator $L_0$.
Given two elliptic operators $L_0$ and $L$ as above with associated matrices $A_0$ and $A$, we define their disagreement as
$$
a(X) := \sup_{\mdl{X-Y}_\infty < \rho(X) /2} \mdl{A(Y)-A_0(Y)}.
$$
\begin{theorem}\label{theoremDahlberg} (Dahlberg'86)
Let $L_0$ and $L$ be two operators as above with $a$ being their disagreement, and let $\omega_0$, $\omega$ denote their respective harmonic measures. Assume that the measure $\frac{a(X)^2}{\rho(X)} dX$ is a Carleson measure:
\begin{equation}\label{ACarleson}
\sup_{Q\in \R^n} \frac{1}{\mdl{Q}} \int_{R_Q} \frac{a(X)^2}{\rho(X)} dX < \infty,
\end{equation}
where $R_Q$ is a Carleson box associated to $Q$.

Suppose also that Carleson measure $\frac{a(X)^2}{\rho(X)} dX$ has vanishing trace:
\begin{equation}\label{DahlbergVTcondition}
\lim_{r \rightarrow 0^+} \sup_{Q\in \R^n, \ell(Q) \leqslant r} \frac{1}{\mdl{Q}} \int_{R_Q} \frac{a(X)^2}{\rho(X)} dX  \; = \; 0.
\end{equation}
Then if $\kappa_0 \in RH_p$ for some $1 < p < \infty$ implies $\kappa \in RH_p$, i.e. if $L_0$ is solvable in $L_{p^\prime}$  then $L$ is solvable in $L_{p^\prime}$ as well.
\end{theorem}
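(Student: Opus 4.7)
The plan is to follow Dahlberg's classical perturbation scheme, using the sharp comparability $[w]_{RH_1} \leqslant e [w]_{A_\infty}$ from Theorem \ref{theorem_RH1<Ainfty} as the bridge between the a priori $A_\infty$ control produced by the PDE and the reverse H\"older bound needed for $L_{p'}$-solvability. Throughout, I would track the Radon-Nikodym ratio $h := d\omega/d\omega_0$, transfer the resulting estimate to the Lebesgue setting via $\kappa = h \cdot \kappa_0$, and then use Gehring-type self-improvement to deduce $\kappa \in RH_p$ from $\kappa_0 \in RH_p$.

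First, I would extract the weight-theoretic consequence of (\ref{ACarleson}). Comparing $u$ with $u_0$ via Green's identity for $Lu = 0$ and $L_0 u_0 = 0$ with identical continuous boundary data, and integrating by parts against the matrix difference $A - A_0$, one obtains a square-function estimate on $\log h$ whose norm is controlled by the Carleson norm of $a(X)^2 dX/\rho(X)$. By the standard square-function-implies-$A_\infty$ principle, this places $h$ in $A_\infty$ with respect to $d\omega_0$, with an explicit quantitative bound on $[h]_{A_\infty(\omega_0)}$ in terms of the Carleson quantity appearing in (\ref{ACarleson}).

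Next, I would use the vanishing trace hypothesis (\ref{DahlbergVTcondition}) to localize. For any $\eta > 0$, on Carleson boxes $R_Q$ with $\ell(Q)$ small enough the Carleson mass of $a^2/\rho$ on $R_Q$ is at most $\eta$, so by Stage 1 the $A_\infty$ constant of $h$ on the corresponding surface cube is at most $1 + C\eta$. Theorem \ref{theorem_RH1<Ainfty} then gives $[h]_{RH_1} \leqslant e(1 + C\eta)$ on such cubes, which is equivalent to exponential integrability of $\log h$ of John-Nirenberg type; hence $h$ lies in $L^s$ uniformly on small cubes for $s$ as large as desired, provided $\eta$ is chosen small enough. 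Theorem \ref{theorem_sharp_Gehr_lemma1} applied to $\kappa_0$ delivers $\kappa_0 \in RH_{p+\delta}$ for some $\delta > 0$ depending only on $[\kappa_0]_{RH_p}$, and a H\"older split of $\kappa^p = h^p \kappa_0^p$ with exponents matched to $p+\delta$ yields the reverse H\"older inequality
$$
\ave{Q}{(\kappa^{p})} \leqslant \left(\ave{Q}{(h^{s})}\right)^{p/s}\left(\ave{Q}{(\kappa_0^{p+\delta})}\right)^{p/(p+\delta)}
$$
on sufficiently small cubes $Q$. A standard good-$\lambda$/patching argument then combines this small-scale control with the fixed-constant large-scale bound coming from Stage 1 to give the desired uniform $RH_p$ estimate for $\kappa$.

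The main obstacle I anticipate is Stage 1: translating the Carleson condition on $a^2/\rho$ into a quantitative $A_\infty$ bound on $h$ requires careful construction of test functions and precise nontangential estimates for the Green's function near the boundary, which is where Dahlberg's original argument does the bulk of the analytic work. The value added by combining this scheme with the present paper is that the sharp constant $e$ in Theorem \ref{theorem_RH1<Ainfty} makes the passage from $A_\infty$ to $RH_1$ on small scales essentially lossless, so that together with the sharp Gehring exponent of Theorem \ref{theorem_sharp_Gehr_lemma1} the perturbation argument becomes tightly quantitative in the parameters of the Carleson condition.
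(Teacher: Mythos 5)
First, a point of comparison: the paper does not prove Theorem \ref{theoremDahlberg} at all --- it is quoted from Dahlberg's 1986 paper \cite{Dahlberg:1986} as background for the applications in Section \ref{s: PDE}, so there is no in-paper proof to measure your sketch against. Judging your proposal on its own terms, the overall architecture (control $h=d\omega/d\omega_0$ by the Carleson norm of $a^2/\rho$, localize via the vanishing trace, then transfer to $\kappa=h\kappa_0$ by a H\"older split against $\kappa_0\in RH_{p+\delta}$ from Gehring) is indeed the shape of Dahlberg's perturbation argument. But the step where you invoke this paper's results is exactly where the argument breaks.

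The gap is quantitative and it is load-bearing. Your H\"older split forces $p/s+p/(p+\delta)=1$, i.e.\ $s=p(p+\delta)/\delta$, so you need $h\in L^s$ locally for an arbitrarily large, $\delta$-dependent $s$; that requires the reverse H\"older behaviour of $h$ on small cubes to improve without bound as the local Carleson mass $\eta\to 0$. The bound of Theorem \ref{theorem_RH1<Ainfty} cannot deliver this: from $[h]_{A_\infty}\leqslant 1+C\eta$ you conclude $[h]_{RH_1}\leqslant e(1+C\eta)$, which tends to $e$, not to $0$, and by the sharp $1$-Gehring lemma (Theorem \ref{theorem_sharp_Gehr_lemma2}) an $RH_1$ constant of size about $e$ yields only one fixed integrability exponent $1+\varepsilon_-$, not ``$L^s$ for $s$ as large as desired.'' Your closing claim that the constant $e$ makes the passage ``essentially lossless on small scales'' is backwards: the linear bound $eQ$ is sharp only in the regime $Q\to\infty$ (this is exactly what the paper's extremal computation $\lim_{Q\to\infty}(\log\gamma+1/\gamma-1)/Q=e$ shows), whereas near $Q=1$ the true bound $\log\gamma+1/\gamma-1$ tends to $0$ and the factor $e$ destroys precisely the smallness you need. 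To repair the scheme you must use the small-constant asymptotics (A${}_\infty$ constant $\to 1$ forces the $RH_1$ constant $\to 0$, and then $\varepsilon_-\to\infty$ in Theorem \ref{theorem_sharp_Gehr_lemma2}), or quote a sharp $A_\infty$-to-$RH_q$ self-improvement with constants tending to $1$, rather than the $C=e$ inequality. A secondary, more technical gap: your displayed inequality takes Lebesgue averages of $h^{s}$, while all the control you produced on $h$ in Stage 1 is relative to $d\omega_0$; converting requires using $\kappa_0\in RH_p\subset A_\infty$ to pass between $dx$ and $d\omega_0$ (or, as Dahlberg does, performing the H\"older split against $d\omega_0$ directly), and the large-scale ``patching'' to recover $RH_p$ on all cubes from small cubes also needs an argument rather than a one-line appeal to good-$\lambda$.
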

In \cite{Fefferman:89} Robert Fefferman showed that in the limiting case $p = 1$ condition (\ref{DahlbergVTcondition}) can be significantly relaxed.

\begin{theorem}\label{theoremRFefferman}(Fefferman'89)
Let $L_0$ and $L$ be two operators as above with $a$ being their disagreement, and let $\omega_0$, $\omega$ denote their respective harmonic measures. Assume that the measure $\frac{a(X)^2}{\rho(X)} dX$ is a Carleson measure (i.e. it satisfies (\ref{ACarleson})).
Suppose also that we have
\begin{equation} \label{conditionRF}
\norm{{\mathcal{A}}(x)}{L_\infty (\R^n)} < \infty\;, \;\;\;\;\; where \;\;\; {\mathcal A}(x) := \left(\int_{\Gamma (x)} \frac{a(X)^2}{\rho(X)^n} dX\right)^\frac{1}{2}
\end{equation}
Then $\kappa_0 \in A_\infty ( = RH_1)$  implies $\kappa \in A_\infty ( = RH_1)$, i.e. if $L_0$ is solvable in $L_{p^\prime}$, $1<p^\prime<\infty$, then $L$ is solvable in $L_{q^\prime}$ for the some $1 < q^\prime < \infty$.

Moreover, $[\kappa]_{RH_1} \leqslant C [\kappa_0]_{RH_1}$ holds with constant $C$ depending on the $L_\infty$ norm of ${\mathcal A}(x)$, the ellipticity constant of the operators $L_0$ and $L$ and the dimension $n$.
\end{theorem}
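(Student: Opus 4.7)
The plan is to revisit Fefferman's 1989 perturbation argument, tracking constants carefully, and then invoke Main Theorem \ref{theorem_RH1<Ainfty} to translate the output into the quantitative $RH_1$-to-$RH_1$ bound. I would proceed in three steps.

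First, I would localize. Fix a surface cube $Q \subset \R^n$ and let $X_Q$ be a corkscrew point at height $\approx \ell(Q)$ above $Q$; write $\omega^{X_Q}$ and $\omega_0^{X_Q}$ for the harmonic measures with pole $X_Q$, and set $h := d\omega^{X_Q}/d\omega_0^{X_Q}$ on $Q$. The target inequality is
$$
\int_Q h \, \log(e+h)\, d\omega_0^{X_Q} \;\leq\; C\bigl(\|\mathcal{A}\|_\infty,\,\lambda,\,n\bigr)\; \omega_0^{X_Q}(Q),
$$
which is the $L\log L$ (equivalently $RH_1$) condition for $h$ with respect to $\omega_0^{X_Q}$, in the spirit of the alternative definition (\ref{def_equiv1_RH1})--(\ref{def_equiv2_RH1}).

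Second, I would prove this via a Green's-identity comparison of $u_F$ and $u_{F,0}$, the $L$- and $L_0$-solutions with common boundary data $\chi_F$ for a Borel $F\subset Q$. The identity yields
$$
\omega^{X_Q}(F) - \omega_0^{X_Q}(F) \;=\; \iint_{T_Q} (A_0 - A) \nabla G^{X_Q} \cdot \nabla u_{F,0}\, dX \;+\;\text{lower order},
$$
where $T_Q$ is the Carleson box over $Q$ and $G^{X_Q}$ is Green's function for $L$. The integrand is pointwise dominated by $a(X)\,|\nabla G^{X_Q}|\,|\nabla u_{F,0}|$; applying Cauchy--Schwarz along cones and the non-tangential square-function estimate for $u_{F,0}$ (together with the Poisson kernel normalization $|\nabla G^{X_Q}|\rho(X)\lesssim \omega_0^{X_Q}$) bounds the right-hand side by $\|\mathcal{A}\|_\infty \, \omega_0^{X_Q}(F)$. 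Standard good-$\lambda$/John--Nirenberg then upgrade this linear bound to exponential integrability of $\log h$ with respect to $d\omega_0^{X_Q}$, from which the $L\log L$ bound follows.

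Third, I would transfer the conclusion from $d\omega_0^{X_Q}$ to Lebesgue measure. Here both the hypothesis $\kappa_0\in RH_1(dx)$ and the translation tool are used: by Main Theorem \ref{theorem_RH1<Ainfty} applied to $\kappa_0$, the density $\kappa_0$ is $A_\infty(dx)$ with comparable constant, so on the cube $Q$ the measures $d\omega_0^{X_Q}$ and $\kappa_0\, dx$ are mutually $A_\infty$; composing the $L\log L$ bound for $h$ against $d\omega_0^{X_Q}$ with the $L\log L$ bound for $\kappa_0$ against $dx$ produces the $L\log L$ bound for $\kappa = h\kappa_0$ against $dx$. Summing over $Q$ gives $[\kappa]_{RH_1} \leq C\,[\kappa_0]_{RH_1}$.

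The main obstacle is the square-function step: condition (\ref{conditionRF}) is strictly weaker than Dahlberg's vanishing-trace condition (\ref{DahlbergVTcondition}), so one cannot make the perturbation term small. Instead one must extract $L\log L$-integrability from a merely finite $L^\infty$ norm of $\mathcal{A}$, which forces the use of Fefferman's sawtooth argument and is responsible for the nonlinear dependence of $C$ on $\|\mathcal{A}\|_\infty$. The third step is also delicate because the naive reverse comparison $[\cdot]_{A_\infty}\lesssim [\cdot]_{RH_1}$ is only doubly-exponential by Theorem \ref{Th:funnybound}; this is why one needs to carry the $L\log L$ condition itself through the PDE argument, rather than passing through $A_\infty$ and back.
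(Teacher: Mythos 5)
You should note at the outset that the paper contains no proof of this statement: Theorem \ref{theoremRFefferman} is R.~Fefferman's 1989 result, quoted from \cite{Fefferman:89} purely as background for the applications in Section \ref{s: PDE}, and the authors only remark that the quantitative bound $[\kappa]_{RH_1}\leqslant C[\kappa_0]_{RH_1}$ ``follows from his proof'' without reproducing it. So your proposal has to be measured against Fefferman's original perturbation argument, and as a reconstruction of it your outline has the right skeleton (localization, the Green's-identity comparison of $\omega^{X_Q}$ and $\omega_0^{X_Q}$, cone-wise Cauchy--Schwarz using \eqref{conditionRF}, then transfer to $dx$) but breaks down at the decisive step.

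The genuine gap is in your Step 2. The claim that the perturbation term is ``bounded by $\|\mathcal{A}\|_{L_\infty}\,\omega_0^{X_Q}(F)$'' cannot be right: if $|\omega^{X_Q}(F)-\omega_0^{X_Q}(F)|\leqslant C\,\omega_0^{X_Q}(F)$ held for every Borel $F\subset Q$, it would give $h=d\omega^{X_Q}/d\omega_0^{X_Q}\in L_\infty$ outright, which is far stronger than the $A_\infty$-type conclusion, is false in general under \eqref{ACarleson}--\eqref{conditionRF}, and would make your subsequent good-$\lambda$/John--Nirenberg step pointless. The identity involves the Green function of $L$, so the factor $|\nabla G^{X_Q}|\rho$ is controlled by $\omega^{X_Q}$, not $\omega_0^{X_Q}$ (a second slip in the sketch); what the estimate actually yields is a mixed bound involving $\omega^{X_Q}(Q)$ and square/nontangential functionals of $u_{F,0}$, and converting that into $\omega^{X_Q}\in A_\infty(\omega_0^{X_Q})$ with constant depending only on $\|\mathcal{A}\|_{L_\infty},\lambda,n$ is precisely the content of Fefferman's main lemma, proved by his stopping-time/sawtooth construction --- which your closing paragraph names but never supplies. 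Step 3 is also only a gesture: ``composing'' the $L\log L$ bound for $h$ against $d\omega_0^{X_Q}$ with $[\kappa_0]_{RH_1}$ does not by itself give the linear dependence $[\kappa]_{RH_1}\leqslant C[\kappa_0]_{RH_1}$; one needs a quantitative route, e.g.\ self-improve $h$ to $RH_p(d\omega_0^{X_Q})$ with $p$ depending only on $\|\mathcal{A}\|_{L_\infty},\lambda,n$, self-improve $\kappa_0$ via the $1$-Gehring estimate \eqref{eq_1.8}, combine by H\"older, and return to the $RH_1$ constant through $[\cdot]_{RH_1}\lesssim\frac{p}{p-1}\log[\cdot]_{RH_p}$. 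Your observation that one must carry the $L\log L$ condition itself rather than detour through $[\cdot]_{A_\infty}$ (because of the exponential loss in Theorem \ref{Th:funnybound}) is correct and worth keeping, but as written the proposal asserts the two hard steps rather than proving them.
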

Robert Fefferman does not state the dependence of the $RH_1$ constants, but it follows from his proof.

In 1991 Fefferman Kenig and Pipher come up with a different method and show that even if condition (\ref{conditionRF}) is omitted, having that the measure $\frac{a(X)^2}{\rho(X)} dX$ is Carleson is enough to keep Radon-Nikodym derivatives in $A_\infty$.

\begin{theorem}\label{theoremFKP} (Fefferman-Kenig-Pipher'91). Let $L_0$ and $L$ be two operators as above with $a$ being their disagreement, and let $\omega_0$, $\omega$ denote their respective harmonic measures. Assume that $\frac{a(X)^2}{\rho(X)} dX$ is a Carleson measure (i.e. it satisfies (\ref{ACarleson})).

Then we have that $\kappa_0 \in A_\infty ( = RH_1)$ implies $\kappa \in A_\infty ( = RH_1)$. More precisely, if $L_0$ is solvable in some $L_{p^\prime}$, $1<p^\prime<\infty$, there exists $1<q^\prime < \infty$ such that $L$ is solvable in $L_{q^\prime}$.
\end{theorem}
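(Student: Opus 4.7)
The strategy is to bootstrap from Fefferman's theorem (Theorem \ref{theoremRFefferman}) to the present statement via a Calder\'on--Zygmund-type stopping-time decomposition, using the sharp equivalence $[\kappa]_{RH_1} \leqslant e[\kappa]_{A_\infty}$ from Theorem \ref{theorem_RH1<Ainfty} to pass freely between $RH_1$ and $A_\infty$ without losing control of constants. Because the two classes coincide qualitatively and quantitatively, it suffices to prove $\kappa \in RH_1$ with a bound depending only on $[\kappa_0]_{A_\infty}$ and the Carleson constant $M$ of $a(X)^2/\rho(X)\,dX$.

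Fix a surface cube $Q_0 \subset \R^n$. The obstruction to applying Theorem \ref{theoremRFefferman} directly is that the Carleson hypothesis (\ref{ACarleson}) only forces the area integral ${\mathcal A}(x)$ to be finite almost everywhere, not uniformly bounded. To remedy this, choose a large threshold $\Lambda$ and select the maximal family $\{Q_j\} \subset Q_0$ of subcubes on which the localized Carleson mass $|Q_j|^{-1}\int_{R_{Q_j}} a(X)^2/\rho(X)\,dX$ first exceeds $\Lambda$. By maximality and hypothesis (\ref{ACarleson}), $\sum_j |Q_j| \leqslant (M/\Lambda)|Q_0|$, so the stopping cubes cover an arbitrarily small fraction of $Q_0$ when $\Lambda$ is chosen large.

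On the sawtooth region above the good set $Q_0 \setminus \bigcup_j Q_j$, the truncated disagreement satisfies Fefferman's pointwise bound (\ref{conditionRF}) with $L_\infty$-norm controlled by $\sqrt{\Lambda}$. Hence Theorem \ref{theoremRFefferman} applied to $L$ and $L_0$ on this sawtooth subdomain yields that the corresponding sawtooth Poisson kernels lie in $RH_1$ with constant controlled by $[\kappa_0]_{RH_1}$ and $\Lambda$. Standard elliptic machinery (Harnack chains, doubling of $\omega$, and the boundary comparison principle for non-negative $L$-solutions) lets us compare the sawtooth harmonic measure with $\omega$ restricted to the good set, transferring the $RH_1$ estimate to $\omega$ on $Q_0 \setminus \bigcup_j Q_j$.

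On each stopping cube $Q_j$ we iterate the same construction, generating a tree of stopping cubes whose total mass decays geometrically. At each generation we pick up a fixed multiplicative loss from the combination of Theorem \ref{theoremRFefferman} and the sharp passage of Theorem \ref{theorem_RH1<Ainfty}, and since only a controlled proportion of mass escapes to the next generation, summing the contributions produces a global $A_\infty$ bound on $\kappa$. The main obstacle is the sawtooth reduction itself: one must set up the sawtooth analogue of harmonic measure carefully and verify that the boundary comparison needed to transfer the $RH_1$ estimate from sawtooth to full domain really goes through, which ultimately rests on the Caffarelli--Fabes--Kenig PDE estimates. Granted these, the remainder is a bookkeeping argument driven by Theorem \ref{theorem_RH1<Ainfty}.
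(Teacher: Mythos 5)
First, note that the paper does not prove this statement at all: Theorem \ref{theoremFKP} is quoted as background from \cite{FeffermanKenigPipher:91} (and the text explicitly says Fefferman--Kenig--Pipher ``come up with a different method''), so there is no in-paper argument to compare against; your proposal has to stand on its own, and it has a genuine gap at its central step.

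The gap is the claim that, after stopping on the cubes where the localized Carleson mass of $\frac{a(X)^2}{\rho(X)}dX$ exceeds $\Lambda$, the cone functional ${\mathcal A}(x)=\bigl(\int_{\Gamma(x)}\frac{a(X)^2}{\rho(X)^n}dX\bigr)^{1/2}$ (or its sawtooth truncation) is bounded by $C\sqrt{\Lambda}$ on the good set, so that Theorem \ref{theoremRFefferman} applies there. This is false: the Carleson hypothesis (\ref{ACarleson}), even with all subcube averages $\leqslant \Lambda$, controls only the \emph{average} of ${\mathcal A}^2$ over a cube (via Fubini, $\int_Q {\mathcal A}_{\ell(Q)}(x)^2dx \lesssim \int_{R_Q}\frac{a^2}{\rho}dX$), not its pointwise value; decomposing the cone over $x$ into dyadic Whitney slabs, each slab can contribute mass comparable to $\Lambda$, and the number of scales is unbounded, so no stopping time on Carleson averages can force ${\mathcal A}\in L_\infty$ on the complement of the stopping cubes. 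This is precisely the difference between (\ref{ACarleson}) and the stronger hypothesis (\ref{conditionRF}), and it is the reason the FKP examples show $RH_p$ is not preserved while $A_\infty$ is; a reduction of Theorem \ref{theoremFKP} to Theorem \ref{theoremRFefferman} by this route cannot work. The remaining ingredients you invoke are also far from bookkeeping: the sawtooth-to-full-domain transfer of harmonic measure estimates is the Dahlberg--Jerison--Kenig sawtooth machinery, and the stopping-time-plus-iteration scheme you describe is essentially the ``extrapolation of Carleson measures'' technique of \cite{HoMa:2010}; but there the base case is a \emph{small Carleson norm} hypothesis proved directly by PDE/square-function estimates, not R.~Fefferman's $L_\infty$ area-integral condition, and the iteration must be run as a good-$\lambda$/distributional argument rather than accumulating a fixed multiplicative loss per generation (which by itself would destroy any quantitative $A_\infty$ bound). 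Finally, the appeal to Theorem \ref{theorem_RH1<Ainfty} is cosmetic here: passing between $[\kappa]_{RH_1}$ and $[\kappa]_{A_\infty}$ is not where the difficulty lies.
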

This theorem looks like a clear generalization of Fefferman's result, but notice that the relationship between $RH_1$ constants of $\kappa$ and $\kappa_0$ is not traced anymore. In this area people normally do not need estimates on the Reverse H\"{o}lder constants, what matters is the value of $p$, for which $\kappa \in RH_p$. Examples (see \cite{FeffermanKenigPipher:91}) suggest that under conditions of  Theorem \ref{theoremFKP}, weaker than the vanishing trace conditions in Dahlberg's theorem, $p$ will not be preserved (i.e. $\kappa_0\in RH_p$ will not imply that $\kappa \in RH_p$), we can only claim that for a given $p$ such that $\kappa_0 \in RH_p$ there exists  a $q$ such that $\kappa \in RH_q$. The natural question to ask here : Is there anything we can say about $q$?

This is where Fefferman's estimates on the $RH_1$ constant of $\kappa$ turn out to be very handy. When we know $[\kappa]_{RH_1}$ we can use the limiting case of the Gehring's theorem for $p=1$, (\ref{eq_1.8}), in the following way:
\begin{theorem}\label{sharpFKP}  Let $L_0$ and $L$ be two operators as above with $a$ being their disagreement, and let $\omega_0$, $\omega$ denote their respective harmonic measures. Assume that $\frac{a(X)^2}{\rho(X)} dX$ is a Carleson measure (i.e. it satisfies (\ref{ACarleson})).

(1)(Fefferman-Kenig-Pipher) We have that $\omega_0 \in A_\infty (=RH_1)$ implies $\omega \in A_\infty(=RH_1)$. More precisely, if $L_0$ is solvable in some $L_{p^\prime}$, $1<p^\prime<\infty$, there exists $1<q^\prime < \infty$ such that $L$ is solvable in $L_{q^\prime}$.

(2)(R.Fefferman) Suppose in addition to (\ref{ACarleson}) the Fefferman's condition (\ref{conditionRF}) is satisfied.

Then $\kappa_0 \in A_\infty (=RH_1)$  implies $\kappa \in A_\infty (=RH_1)$, and, moreover $[\kappa]_{RH_1} \leqslant C [\kappa_0]_{RH_1}$ with $C = C(\norm{{\mathcal A}(x)}{L_\infty (\R^n)}, \lambda, n)$, which means that
$$
\kappa \in RH_{1+\varepsilon} \;\; with \;\; \varepsilon =  \frac{\log 4}{n \log 2 + 8 C [\kappa_0]_{RH_1}}.
$$

i.e. if $L_0$ is solvable in $L_{p^\prime}$ ($\kappa_0 \in RH_p$), $1<p^\prime<\infty$, then $L$ is solvable in $L_{q^\prime}$ for the $q = 1+ \frac{\log 4}{n \log 2 + 8 C [\kappa_0]_{RH_1}}$. Note also that for any $1<p<\infty$ we have $[\kappa_0]_{RH_1} \leqslant \frac{p}{p-1} \log[\kappa_0]_{RH_p}$.

(3)(Dahlberg) Suppose also that measure $\frac{a(X)^2}{\rho(X)} dX$ has vanishing trace, i.e. satisfies (\ref{DahlbergVTcondition}).

Then if $\kappa_0 \in RH_p$ for some $1 < p < \infty$ implies $\kappa \in RH_p$, i.e. if $L_0$ is solvable in $L_{p^\prime}$, $1<p^\prime<\infty$, then $L$ is solvable in $L_{p^\prime}$ for the same $p$.
\end{theorem}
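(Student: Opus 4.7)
The theorem is an assembly statement. Parts (1) and (3) are direct restatements of Theorems \ref{theoremFKP} and \ref{theoremDahlberg} respectively; once one checks that the hypothesis lists coincide, no new argument is required, so I would dispose of them in a single sentence. Part (2) carries essentially all of the new content, and I would split it into two clean steps.

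Step one is to invoke R.~Fefferman's Theorem \ref{theoremRFefferman}, whose quantitative conclusion (noted in the remark immediately following its statement) supplies $[\kappa]_{RH_1}\leqslant C[\kappa_0]_{RH_1}$ with $C=C(\norm{\mathcal{A}}{L_\infty(\R^n)},\lambda,n)$. Step two is to apply the nonsharp $n$-dimensional $1$-Gehring estimate (\ref{eq_1.8}) to the weight $\kappa$ itself: substituting the bound $[\kappa]_{RH_1}\leqslant C[\kappa_0]_{RH_1}$ into the denominator of $\varepsilon=\log 4/(n\log 2+8[\kappa]_{RH_1})$ yields the exact value of $\varepsilon$ stated, whence $\kappa\in RH_{1+\varepsilon}$ and $L$ is solvable in $L_{q'}$ with $q=1+\varepsilon$. (We use that $RH_{1+\varepsilon_1}\subset RH_{1+\varepsilon_2}$ whenever $\varepsilon_1\geqslant\varepsilon_2$, which is needed because replacing $[\kappa]_{RH_1}$ by its upper bound $C[\kappa_0]_{RH_1}$ can only shrink $\varepsilon$.)

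The concluding side remark, $[\kappa_0]_{RH_1}\leqslant \tfrac{p}{p-1}\log[\kappa_0]_{RH_p}$ for every $p>1$, is a short standalone observation. Fix an interval $I$ and set $G(q):=\log m_I w^q-q\log m_I w$. As the cumulant generating function of $\log w$ under the normalized Lebesgue measure on $I$, shifted by a linear term, $G$ is convex on $(0,\infty)$ with $G(1)=0$; hence the difference quotient $G(q)/(q-1)$ is nondecreasing on $(1,\infty)$. Combining this monotonicity with (\ref{RH1aslimitRHp}) gives
\begin{equation*}
m_I\!\left(\frac{w}{m_I w}\log\frac{w}{m_I w}\right)=\lim_{q\to 1^+}\frac{G(q)}{q-1}\leqslant \frac{G(p)}{p-1}=\frac{p}{p-1}\log\frac{(m_I w^p)^{1/p}}{m_I w}\leqslant \frac{p}{p-1}\log[w]_{RH_p},
\end{equation*}
and taking the supremum over $I$ with $w=\kappa_0$ yields the claim.

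No step poses a serious obstacle: the argument reduces to citing Theorems \ref{theoremDahlberg}, \ref{theoremRFefferman}, \ref{theoremFKP} and substituting R.~Fefferman's quantitative bound into (\ref{eq_1.8}). The only mildly delicate point is tracking the dependence $[\kappa]_{RH_1}\leqslant C[\kappa_0]_{RH_1}$ through R.~Fefferman's perturbation proof of Theorem \ref{theoremRFefferman}; since the paper asserts that this dependence is already implicit in the original argument, I would simply quote it rather than reconstruct the perturbation scheme here.
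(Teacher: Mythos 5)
Your proposal matches the paper's treatment: Theorem \ref{sharpFKP} is established there only by assembling Theorems \ref{theoremFKP}, \ref{theoremRFefferman} and \ref{theoremDahlberg} and substituting R.~Fefferman's bound $[\kappa]_{RH_1}\leqslant C[\kappa_0]_{RH_1}$ into the nonsharp $1$-Gehring estimate (\ref{eq_1.8}), exactly as you do, and your remark that enlarging the $RH_1$ constant only shrinks $\varepsilon$ (so $RH_{1+\varepsilon_1}\subset RH_{1+\varepsilon_2}$ for $\varepsilon_1\geqslant\varepsilon_2$) is the correct, if tacit, justification for that substitution. Your convexity/difference-quotient argument for $[\kappa_0]_{RH_1}\leqslant \frac{p}{p-1}\log[\kappa_0]_{RH_p}$ correctly supplies, via (\ref{RH1aslimitRHp}), the one step the paper states without proof.
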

This theorem (the $\varepsilon$ part of $(2)$) is not sharp. Sharp $1$-Gehring would help the part $(2)$, and we will try to get it, but it would not help to trace the dependence of $q$ on $p$ in part $(1)$. In fact, it is not clear here if Fefferman's assumption can be relaxed.

\section{Proofs.}
\label{s: proofs}
\subsection{Proof of Theorem \ref{theorem_RH1<Ainfty}}
\label{s: proof : RH1<eAinfty}
(Bellman function proof) We will prove that if $w$ belongs to the Muckenhoupt class $A_\infty$ on the interval $J$, $w\in A_\infty(J)$, i.e.
\begin{equation}
\label{ineq_1} \sup_{I\subset J} \ave{I}{w} \; e^{-\ave{I}{(\log w)}} \; \leqslant \; [w]_{A_{\infty,J}},
\end{equation}
then
\begin{equation}
\label{ineq_RH1_Ainf} \ave{J}{(w \log w)} \; \leqslant \; \ave{J}{w} \; \ave{J}{(\log w)} + e [w]_{A_{\infty,J}} \ave{J}{w}.
\end{equation}

We start with the following Lemma:

\begin{lemma}
\label{lemma1} In order to prove inequality (\ref{ineq_RH1_Ainf}), it is enough to show that for every small $\varepsilon > 0$ and a Bellman function $B_{Q,\varepsilon} = B_{Q,\varepsilon}(x,y) = B(x,y)$ (we will drop index $Q$ for simplicity), defined on the domain
$$
\Omega_{Q+\varepsilon} \; = \; \left\{ \vec{x} = (x,y)\in \mathbb{R}^2: x\geqslant 0, \;\;\; 1 \leqslant xe^{-y} \leqslant Q+\varepsilon \right\}
$$
that satisfies the following properties:

(1) $B$ is continuous on $\Omega_{Q+\varepsilon}$

(2) $B(x,y)$ is 
bounded from above by $x\log x + eQx$:
\begin{equation}
 B(x,y) \; \leqslant \; x\log x + eQx \;\;\;\;\; \forall (x,y)\in \Omega_{Q+\varepsilon},
\end{equation}
and
\begin{equation}\label{ineq_3}B(x, y)\geqslant x\log(x).\end{equation}

(3) $B(x,y)$ is locally convex on $\Omega_{Q+\varepsilon}$:
\begin{equation}
\label{ineq_4}
B^{\prime\prime}_{yy}(x,y) \leqslant 0 \;\;\; and \;\;\; \det \left(
\begin{tabular}{c c} $B^{\prime\prime}_{xx}$ & $B^{\prime\prime}_{xy}$\\
$B^{\prime\prime}_{xy}$ & $B^{\prime\prime}_{yy}$
\end{tabular} \right) = 0 \;\;\; \forall (x,y)\in \Omega_{Q+\varepsilon}.
\end{equation}
\end{lemma}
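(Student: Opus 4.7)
The lemma is a reduction from the analytic inequality \eqref{ineq_RH1_Ainf} to a function-theoretic problem: construct a $B$ with the listed properties. The proof is a standard Bellman function iteration, and here is how I would run it.

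\emph{Preparation.} First, truncate $w$ via Lemma \ref{lemma_technical1} so that $w$ is bounded above and away from zero; this preserves $[w]_{A_\infty,J} \leq Q$, makes $w\log w \in L^\infty(J)$, and legitimises the limits taken below. The truncation is removed at the very end by monotone/dominated convergence. For a subinterval $I \subset J$ write $\vec{x}_I := (\ave{I}{w}, \ave{I}{\log w})$; since $[w]_{A_\infty, I} \leq [w]_{A_\infty, J} \leq Q$, we have $\vec{x}_I \in \Omega_Q \subset \Omega_{Q+\varepsilon}$.

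\emph{Iterated splitting and concavity.} For each such $I$, apply Lemma \ref{theorem_Vasyunin} with $Q_1 = Q+\varepsilon$ to obtain a decomposition $I = I^- \sqcup I^+$ with $\alpha^\pm := |I^\pm|/|I|$ bounded away from $0$ and $1$ uniformly in $I$ and $w$, and with the entire segment $[\vec{x}_{I^-}, \vec{x}_{I^+}]$ contained in $\Omega_{Q+\varepsilon}$. The Monge--Amp\`ere condition \eqref{ineq_4} --- trace $\leq 0$ and determinant $=0$ --- forces the Hessian of $B$ to be negative semidefinite throughout $\Omega_{Q+\varepsilon}$, so $B$ is concave along every segment that stays in this domain. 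Since $\vec{x}_I = \alpha^- \vec{x}_{I^-} + \alpha^+ \vec{x}_{I^+}$, this yields the basic Bellman inequality
$$B(\vec{x}_I) \geq \alpha^- B(\vec{x}_{I^-}) + \alpha^+ B(\vec{x}_{I^+}).$$
Iterating this splitting produces, at generation $n$, a partition $\{I_n^k\}_k$ of $J$, and telescoping gives
$$B(\vec{x}_J) \geq \sum_k \frac{|I_n^k|}{|J|}\, B(\vec{x}_{I_n^k}).$$

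\emph{Passage to the limit.} The uniform bound on $\alpha^\pm$ forces $\max_k |I_n^k| \to 0$ as $n \to \infty$. By Lebesgue differentiation, $\vec{x}_{I_n^{k(t)}} \to (w(t),\log w(t))$ for almost every $t \in J$, and each such limit point sits on the lower boundary $\{y = \log x\}$ of $\Omega_{Q+\varepsilon}$. Continuity of $B$ together with dominated convergence (available thanks to the truncation) yields
$$B(\vec{x}_J) \geq \frac{1}{|J|}\int_J B(w(t),\log w(t))\,dt \geq \ave{J}{(w\log w)},$$
where the second inequality uses the pointwise bound \eqref{ineq_3}. Combined with the upper bound in (2) evaluated at $\vec{x}_J$, then sending $\varepsilon \to 0$ and removing the truncation, one recovers \eqref{ineq_RH1_Ainf}.

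\emph{Main obstacle.} The delicate step is the concavity inequality: \eqref{ineq_4} only gives concavity along segments lying inside $\Omega_{Q+\varepsilon}$, and $\Omega_Q$ itself is not convex, so a segment joining two admissible endpoints in $\Omega_Q$ can easily leave $\Omega_Q$. This is precisely why the domain is enlarged from $\Omega_Q$ to $\Omega_{Q+\varepsilon}$ and why Lemma \ref{theorem_Vasyunin} is stated with two constants $Q < Q_1$: it guarantees a splitting whose segment remains in $\Omega_{Q_1}$ with a ratio bounded away from degeneracy. Everything else --- the iteration, the passage to the pointwise limit, and the reduction to bounded $w$ --- is routine once this geometric point is secured.
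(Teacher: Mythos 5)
Your proposal is correct and follows essentially the same route as the paper's own proof: truncation via Lemma \ref{lemma_technical1}, iterated splitting via Lemma \ref{theorem_Vasyunin} with segments kept in the enlarged domain $\Omega_{Q+\varepsilon}$, concavity telescoping, Lebesgue differentiation plus dominated convergence, and then the two pointwise bounds on $B$ to close the argument. Your reading of \eqref{ineq_4} as degenerate concavity (negative semidefinite Hessian) and your emphasis on why the domain must be enlarged match the paper's intent exactly.
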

We will first prove Lemma \ref{lemma1} and then present the function $B$, satisfying the above properties.

\begin{proof}
(Proof of Lemma \ref{lemma1}) Let $w$ be an $A_\infty$-weight on the interval $J$. We will first truncate it by $\frac{1}{n}$ from below and by $n$ from above:
\begin{eqnarray}
\label{eq_5} w_n(t) \; := \; \left\{
\begin{tabular}{c c}
$n$, & $w(t) \geqslant n$\\
$w(t)$, & $\frac{1}{n} \leqslant w(t) \leqslant n$\\
$\frac{1}{n}$, & $w(t) \leqslant \frac{1}{n}$
\end{tabular} \right.
\end{eqnarray}
and show that Lemma \ref{lemma1} holds for the weight $w_n$ with all constants independent of $n$. Then by sending $n$ to infinity and applying Lebesgue dominated convergence theorem one obtains the inequality (\ref{ineq_RH1_Ainf}) for any $w\in L_1^{loc}(\mathbb{R})$.

Thus, we consider the truncated weight $w_n(t)$ on the interval $J\in \mathbb{R}$. By the Lemma \ref{lemma_technical1}, we know that the $A_\infty$ constant of the truncated weight $w_n(t)$ does not exceed the $A_\infty$ constant of the original weight $w$.


Now, for every interval $I\subset J\subset \mathbb{R}$, let
$$
\vec{x}_I \; = \; (x_I,y_I) \; := \; \left( \ave{I}{(w_n)}, \ave{I}{(\log w_n)} \right).
$$
Then, for every such $I$, by the Reznikov-Vasyunin-Volberg Theorem, $\vec{x}_I \in \Omega_{[w]_{A_\infty}}$ and, moreover, $\frac{1}{n} \leqslant x_I \leqslant n$.

Next, we use the Lemma \ref{theorem_Vasyunin} in order to construct the sequence $\left\{ I_k^j\right\}_{1\leqslant j \leqslant 2^k, \;k,j\in \mathbb{N}}$ of subintervals of $I$ with properties that $\forall k\in N$  set $ {\mathcal{J}}_k :=\left\{ I_k^j\right\}_{1\leqslant j \leqslant 2^k}$ forms a partition of $J$, lengths of $I_k^j$ approach $0$ as $k\rightarrow \infty$ and for every $k,j\in \mathbb{N}$, $1\leqslant j \leqslant 2^k-1$, the line segment connecting points $\vec{x}_{I_k^j}$ and $\vec{x}_{I_k^{j+1}}$ belongs to the extended domain $\Omega_{[w]_{A_\infty}+\varepsilon}$, while points $\left\{ \vec{x}_{I_k^j} \right\}$ lie in $\Omega_{[w]_{A_\infty}}$.

%

We apply the Lemma \ref{theorem_Vasyunin} to the interval $J =: I_0^1$ with $\varepsilon > 0$ from conditions of Lemma \ref{lemma1} to split it into $J=I_0^1 = I_1^1 \cup I_1^2$. We repeat this procedure with the same $\varepsilon$ for $I_1^1$ and $I_1^2$ and obtain $I_2^1$, $I_2^2$, $I_2^3$ and $I_2^4$. This way we build $\left\{ I_k^j \right\}_{k,j\in \mathbb{N}, \;\; 1\leqslant j \leqslant 2^k}$.

Since both $\delta^k$ and $(1-\delta)^k$ $\rightarrow 0$ as $k\rightarrow \infty$, $\lim_{k\rightarrow \infty} \max_j \mdl{I_k^j} = 0$. By the construction, $\forall k\in \mathbb{N}$ $J= \bigcup_j I_k^j$ and, finally, for every $k,j\in \mathbb{N}$, $1\leqslant j \leqslant 2^k$ we have $\vec{x}_{I_k^j}\in \Omega_{[w]_{A_\infty}}$ and the closed interval $\left[ \vec{x}_{I_k^j}; \vec{x}_{I_k^{j+1}} \right] \subset \Omega_{[w]_{A_\infty}+\varepsilon}$ whenever $I_k^j$ and $I_k^{j+1}$ come from the same parent $I_{k-1}^i$.

Denote
$$
x_{k,n}(s) \; := \; \ave{I_k^j}{(w_n)}, \;\;\;\; s\in I_k^j
$$
$$
y_{k,n}(s) \; := \; \ave{I_k^j}{(\log w_n)}, \;\;\;\; s\in I_k^j.
$$
Both $x_{k,n}$ and $y_{k,n}$ are step functions and for almost every $s$ we have that $\left( x_{k,n}(s), \; y_{k,n}(s) \right) \rightarrow \left( w_n(s), \log w_n(s) \right)$ as $k\rightarrow \infty$.

To finish the proof of Lemma \ref{lemma1}, we observe that by the concavity of function $B$,
\begin{eqnarray*}
B(x_I,y_I) &\geqslant& \frac{\mdl{I_+}}{\mdl{I}} B\left( x_{I_+}, y_{I_+} \right) + \frac{\mdl{I_-}}{\mdl{I}} B\left( x_{I_-}, y_{I_-} \right)\\
&\geqslant& \frac{\mdl{I_+}}{\mdl{I}} \frac{\mdl{I_{++}}}{\mdl{I_+}} B\left( x_{I_{++}}, y_{I_{++}} \right) + \frac{\mdl{I_+}}{\mdl{I}} \frac{\mdl{I_{+-}}}{\mdl{I_+}} B\left( x_{I_{+-}}, y_{I_{+-}} \right)\\
&& + \frac{\mdl{I_-}}{\mdl{I}} \frac{\mdl{I_{-+}}}{\mdl{I_-}} B\left( x_{I_{-+}}, y_{I_{-+}} \right) + \frac{\mdl{I_-}}{\mdl{I}} \frac{\mdl{I_{--}}}{\mdl{I_-}} B\left( x_{I_{--}}, y_{I_{--}} \right)\\
&\geqslant& \ldots\\
&\geqslant& \sum_{k,j\in \mathbb{N}, \;\; 1\leqslant j \leqslant 2^k} \frac{\mdl{I_k^j}}{\mdl{I}} B \left( x_{I_k^j}, y_{I_k^j} \right).
\end{eqnarray*}
Therefore, $B(x_J, y_J) \geqslant \frac{1}{\mdl{J}} \int_J B(x_{k,n}(s), y_{k,n}(s)) ds$.

Since $w_n$ was bounded from above and below, $\frac{1}{n} \leqslant w_n(t) \leqslant n$, points $(x_{k,n}, y_{k,n})$ belong to the compact set $K_{w,n} \subset \mathbb{R}^2$. $B$ is continuous, so it is bounded on $K_w$ and, by the Lebesgue dominated convergence theorem and the boundedness property (\ref{ineq_3}) of $B$, we have
\begin{eqnarray*}
B(x_J, y_J) &\geqslant& \lim_{k\rightarrow \infty} \frac{1}{\mdl{J}} \int_J B(x_{n,k}(s), y_{n,k}(s)) ds\\
&\geqslant& \lim_{k\rightarrow \infty} \frac{1}{\mdl{J}} \int_J x_{n,k}(s) \; \log x_{n,k}(s) ds\\
&=& \frac{1}{\mdl{J}} \int_J w_n(s) \; \log w_n(s) ds \; = \; \ave{J}{(w_n \log w_n)},
\end{eqnarray*}
which, in its turn, implies that
\begin{eqnarray*}
\ave{J}{(w_n \log w_n)} &\leqslant& B(x_J, y_J) \; \leqslant \; x_J \log x_J + eQx_J\\
&=& \ave{J}{w_n} \; \log \ave{J}{w_n} + eQ\; \ave{J}{w_n}.
\end{eqnarray*}

Since this bound does not depend on $n$, we send $n\rightarrow \infty$ and obtain desired inequality for all $A_\infty$-weights $w$.

Proof of Lemma \ref{lemma1} is complete.
\end{proof}

Now we need to show that $B$ with the above properties exists. The following lemma will help us define such function $B(x,y)$.

In fact, for any $Q>1$ we will construct the {\it exact} Bellman function:
$$
\mathcal{B}_{Q}(x,y)=\sup\{ \ave{I}{(w\log(w))}\colon \ave{I}{w}=x, \; \ave{I}{(\log(w))}=y, \; [w]_{\infty}\leqslant Q\}.
$$
We will need some preparation. First, let $\gamma$ be the root of the equation
$$t - \log(t)=1+\log(Q),$$
such that $\gamma<1$. Next, fix a point $(x,y)\in \Om_Q = \{(x,y)\colon 1\leqslant xe^{-y}\leqslant Q\}$ and let $v=v(x,y)$ be a root of the equation
$$
y=\frac{\gamma\cdot x}{v} + \log(v)-\gamma.
$$
such that $v\leqslant x$.

In fact, the last equation is an equation of a line $\ell$, such that $(v, \log(v))\in \ell$ and $\ell$ is tangent to the curve $xe^{-y}=Q$. So basically we take a point $(x,y)$ and a tangent line, which passes through this point and goes to the right. This line ``hits'' the curve $xe^{-y}=1$ exactly at the point $(v, \log(v))$.

We are ready to state the following lemma.
\begin{lemma}\label{lemma2}
Let $\gamma$ be as above and $v=v(x,y)$ be a function, implicitly defined (on the domain $\Om_Q$) by the equation
$$
y=\frac{\gamma\cdot x}{v} + \log(v)-\gamma,
$$
and such that $v\leqslant x$.

Denote
$$
B(x,y)=x\log(v)+\frac{x-v}{\gamma}.
$$
Then $B(x,y)$ satisfies all properties from the Lemma \ref{lemma1}.


\end{lemma}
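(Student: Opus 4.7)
The plan is to verify the three properties required by Lemma~\ref{lemma1} by exploiting the geometric fact that the graph of $B$ is a developable (ruled) surface, foliated by the tangent lines $\ell_v$ to the curve $xe^{-y}=Q$, with $v$ playing the role of the foliation parameter. First I would analyze the implicit function $v=v(x,y)$. Writing the defining equation as $y=f(v)$ with $f(v):=\gamma x/v+\log v-\gamma$ (for fixed $x$), one has $f'(v)=(v-\gamma x)/v^2$, so $f$ is increasing on $[\gamma x,x]$ from $f(\gamma x)=\log x-\log Q$ up to $f(x)=\log x$. Hence for every $(x,y)\in\Om_Q$ the branch $v\in[\gamma x,x]$ is uniquely determined, $v$ is smooth in $(x,y)$ by the implicit function theorem, and the continuity property of $B$ is immediate. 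The passage to the slightly enlarged domain $\Om_{Q+\varepsilon}$ is handled by repeating the construction with $Q+\varepsilon$ in place of $Q$ and sending $\varepsilon\to 0$ at the end of the proof of Lemma~\ref{lemma1}.

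For the two-sided bound, I fix $x$ and treat $g(v):=x\log v+(x-v)/\gamma$ as a function of $v\in[\gamma x,x]$. Since $g'(v)=x/v-1/\gamma$ is nonpositive on this interval, $g$ is decreasing, so
\[
x\log x\;=\;g(x)\;\leqslant\;B(x,y)\;=\;g(v)\;\leqslant\;g(\gamma x)\;=\;x\log x+x\bigl(\log\gamma+1/\gamma-1\bigr).
\]
The lower inequality is exactly (\ref{ineq_3}). For the upper inequality I substitute $\log\gamma=\gamma-1-\log Q$ from the definition of $\gamma$ to rewrite the right-hand side as $x\log x+x(\gamma+1/\gamma-2-\log Q)$, and the task reduces to $\gamma+1/\gamma-2-\log Q\leqslant eQ$. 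But the defining equation of $\gamma$ rewrites as $eQ=e^\gamma/\gamma$, so $1/\gamma=(eQ)e^{-\gamma}\leqslant eQ$ trivially, with enough slack to absorb $\gamma-2-\log Q\leqslant 0$ for every $Q>1$.

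For the Monge--Amp\`ere property, the cleanest argument is to note that the level sets $\{v=\mathrm{const}\}$ are exactly the tangent lines $\ell_v$, and along each such line
\[
B\;=\;x(\log v+1/\gamma)\;-\;v/\gamma
\]
is linear in $x$; hence the graph of $B$ is ruled with constant tangent plane along each ruling, forcing $\det(\mathrm{Hess}\,B)\equiv 0$. For the sign condition $B''_{yy}\leqslant 0$, I would differentiate $y=\gamma x/v+\log v-\gamma$ implicitly to obtain $v_x=-\gamma v/(v-\gamma x)$ and $v_y=v^2/(v-\gamma x)$. The $v_x,v_y$ terms then cancel cleanly in the first derivatives of $B$, producing the identities $B_x=\log v+1+1/\gamma$ and $B_y=-v/\gamma$, from which $B''_{yy}=-v_y/\gamma=-v^2/[\gamma(v-\gamma x)]\leqslant 0$ because $v\geqslant\gamma x$ throughout $\Om_Q$; a direct calculation confirms $B''_{xx}B''_{yy}-(B''_{xy})^2=0$ once more as a check.

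The main obstacle, to my mind, is the algebraic identity $eQ=e^\gamma/\gamma$ hidden in the definition of $\gamma$: this is precisely what makes the constant $e$ in (\ref{maininequality}) come out sharply, and it governs both the upper bound in Property~(2) above and the later construction of an extremizing weight for Theorem~\ref{theorem_RH1<Ainfty}. Everything else is routine calculus once the geometric picture of a foliation by tangent lines is in place.
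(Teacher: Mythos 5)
Your proof is correct and follows essentially the same route as the paper: the same implicit differentiation giving $v_x=\gamma v/(\gamma x-v)$, $v_y=v^2/(v-\gamma x)$, the same first and second derivatives of $B$, the sign $B''_{yy}\leqslant 0$ and degeneracy of the Hessian from $\gamma x\leqslant v$, and the two-sided bound via monotonicity in $v$ (the paper's $\varphi(s)$ with $s=v/x$), with the key estimate $\log\gamma+1/\gamma-1\leqslant eQ$ coming from $e^{\gamma}/\gamma=eQ$. Your extra touches (the monotonicity argument for well-definedness of the branch $v\in[\gamma x,x]$, and the ruled-surface explanation of $\det\mathrm{Hess}\,B=0$) merely make explicit computations the paper leaves to the reader.
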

\begin{zamech*}
We remark that instead of writing $Q+\ep$ of $Q_1$ we write $Q$. It is fine because we do it for every $Q>1$.
\end{zamech*}
\begin{proof}
%
%




We leave the differentiation of the function $B$ to the reader. However, we state the answer for several derivatives.
First of all,
\begin{align*}
v'_x = \frac{\gamma v}{\gamma x - v}, \qquad \qquad v'_y = \frac{v^2}{v-\gamma x}.
\end{align*}
Next,
\begin{align*}
&B'_x = \log(v)+\frac{1}{\gamma}+1, \; \; \; \; \; \; B''_{xx} = \frac{\gamma}{\gamma x - v}; \\
&B'_y = -\frac{v}{\gamma}, \; \; \; \; \; \;\; \; \; \; \; \; \;\;\;\;\;\;\;\;\;\; B''_{yy} = -\frac{1}{\gamma}\frac{v^2}{v-\gamma x};\\
&B''_{xy} = \frac{v}{v-\gamma x}.
\end{align*}
Finally, by the definition of $v$, we have $\gamma x \leqslant v$, so $B''_{xx}\leqslant 0$.
We also notice that $B(v, \log(v))=v\log(v)$. Thus, we need to prove that $x\log(x)\leqslant B(x, y)\leqslant x\log(x)+eQx$. We notice that
$$
\frac{B(x,y)-x\log(x)}{x} = \log\frac{v}{x} + \frac{1-\frac{v}{x}}{\gamma}.
$$
Denote $s=\frac{v}{x}$ and notice that $s\in [\gamma, 1]$. Then
$$
\frac{B(x,y)-x\log(x)}{x} = \log(s)+\frac{1-s}{\gamma} = \vf(s).
$$
Since $\vf'(s) = \frac{1}{s}-\frac{1}{\gamma} \leqslant 0$, we get
$$
\frac{B(x,y)-x\log(x)}{x}\leqslant \vf(\gamma) = \log(\gamma)+\frac{1}{\gamma}-1.
$$

It is not hard to check that the last expression is not bigger than $eQ$. Moreover,
$$
\lim\limits_{Q\to \infty}\frac{\log(\gamma)+\frac{1}{\gamma}-1}{Q} = e,
$$
so the constant $e$ is sharp.
Finally,
$$
\frac{B(x,y)-x\log(x)}{x}\geqslant \vf(1) = 0,
$$
so $B(x,y)\geqslant x\log(x)$, which finishes the proof.
\end{proof}

We have proved that our function $B(x,y)$ is bigger or equal than the exact Bellman function $\mathcal{B}(x,y)$. This proves the inequality \eqref{maininequality} with constant $C=e$. In order to prove that this is the best possible constant we need to do the following: for every point $(x,y)\in \Omega_Q$ present a weight $w$, such that $[w]_{\infty}\leqslant Q$, $\ave{I}{w}=x$, $\ave{I}{(\log(w))}=y$, and $B(x,y)=\ave{I}{(w\log(w))}$. The following lemma takes care of this issue.
\begin{lemma}
For a point $(x,y)\in \Om_Q$ consider a function
$$
w(t)=\begin{cases} v \left(\frac{t}{a}\right)^{\gamma-1}, &t\in [0, a] \\
                    v, &t\in [a,1]\end{cases},
$$
where $a$ is taken such that $(x,y)=(\ave{I}{w}, \ave{I}{(\log(w))})$. Then $[w]_{\infty}\leqslant Q$ and $B(x,y)=\ave{I}{(w\log(w))}$.
\end{lemma}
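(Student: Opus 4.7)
The plan is to verify the three claims in sequence: (i) the parameter $a$ is well-defined and lies in $[0,1]$, (ii) the $A_\infty$-constant of the constructed $w$ is exactly $Q$, and (iii) the average $\ave{I}{(w\log w)}$ equals $B(x,y)$. Throughout I take $I=[0,1]$ without loss of generality, since the $A_\infty$ constant and both averages are translation- and dilation-invariant.

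First I would compute
$$\ave{I}{w}=\int_0^a v(t/a)^{\gamma-1}dt+v(1-a)=\frac{va}{\gamma}+v(1-a)=v+\frac{v a(1-\gamma)}{\gamma},$$
and, using $\int_0^a \log(t/a)\,dt=-a$,
$$\ave{I}{(\log w)}=\log v + (\gamma-1)\cdot\frac{1}{1}\cdot(-a)\cdot 1 = \log v+(1-\gamma)a.$$
Setting these equal to $x$ and $y$ respectively and eliminating $a$ recovers exactly the defining equation $y=\gamma x/v+\log v-\gamma$, so $v=v(x,y)$ is the function from Lemma \ref{lemma2} and $a=\gamma(x-v)/(v(1-\gamma))$. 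The condition $a\in[0,1]$ amounts to $\gamma x\leqslant v\leqslant x$, which is precisely the constraint that was used throughout the construction of $B$.

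Next I would verify $[w]_{A_\infty}\leqslant Q$. The weight $w$ is constant on $[a,1]$ and a power on $[0,a]$, so any test interval $J\subset [a,1]$ gives ratio $1$. For $J=[0,c]$ with $c\leqslant a$, the power-law scaling $w(t)=v(t/a)^{\gamma-1}$ gives
$$\ave{[0,c]}{w}=\frac{v}{\gamma}(c/a)^{\gamma-1},\qquad \ave{[0,c]}{(\log w)}=\log v+(\gamma-1)\log(c/a)+1-\gamma,$$
so the quotient $\ave{[0,c]}{w}\,e^{-\ave{[0,c]}{(\log w)}}=e^{\gamma-1}/\gamma$, and by the defining equation $\gamma-\log\gamma=1+\log Q$ this equals exactly $Q$. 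The remaining cases are subintervals $[b,c]$ with either $0<b<c\leqslant a$, or $b<a<c$; in both cases the ratio is strictly smaller than $Q$. This is the standard fact that for a pure power-law weight the $A_\infty$ supremum is attained on intervals touching the singularity, and a direct calculation (parameterize $b=\alpha c$ and show monotonicity in $\alpha$) reduces it to the elementary inequality
$$(1-\alpha^\gamma)\,\alpha^{(\gamma-1)\alpha/(1-\alpha)}\leqslant 1-\alpha,\qquad \alpha\in[0,1],$$
which follows by taking logarithms and using convexity of $s\mapsto s\log s$. For intervals straddling $a$, a comparison with the case $c=a$ (where equality with $Q$ is attained) finishes the argument.

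Finally I would compute $\ave{I}{(w\log w)}$. Splitting the integral and using the standard identity $\int_0^1 s^{\gamma-1}\log s\,ds=-1/\gamma^2$ yields
$$\int_0^a v(t/a)^{\gamma-1}\log w(t)\,dt=\frac{va\log v}{\gamma}-\frac{(\gamma-1)va}{\gamma^2},$$
and adding the contribution $v\log v\,(1-a)$ from $[a,1]$ gives
$$\ave{I}{(w\log w)}=v\log v\left[\frac{a}{\gamma}+1-a\right]+\frac{(1-\gamma)va}{\gamma^2}.$$
The bracket equals $x/v$ by the formula for $\ave{I}{w}$, and the second term equals $(x-v)/\gamma$ by the relation $x-v=va(1-\gamma)/\gamma$. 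Hence $\ave{I}{(w\log w)}=x\log v+(x-v)/\gamma=B(x,y)$, as required.

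The main obstacle is the $A_\infty$ calculation in the middle paragraph: while the value on intervals of the form $[0,c]$ falls out immediately from the definition of $\gamma$, ruling out larger ratios on off-centered subintervals requires the elementary but slightly delicate inequality above. Everything else is bookkeeping with the defining equation for $v$ and the two standard integrals $\int_0^1 \log s\,ds=-1$ and $\int_0^1 s^{\gamma-1}\log s\,ds=-1/\gamma^2$.
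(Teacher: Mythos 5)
The paper itself never proves this lemma (it is declared ``technical'' and skipped, with a pointer to the analogous Lemma \ref{podpirprimer} and to the general maximizer principle of \cite{BMO:mnogo}), so your attempt is doing more than the text does. Most of it is sound: the computations $\ave{I}{w}=v+\frac{va(1-\gamma)}{\gamma}$, $\ave{I}{(\log w)}=\log v+(1-\gamma)a$, the identification $a=\frac{\gamma(x-v)}{v(1-\gamma)}$ with $a\in[0,1]\Leftrightarrow \gamma x\leqslant v\leqslant x$, and the final computation $\ave{I}{(w\log w)}=x\log v+\frac{x-v}{\gamma}=B(x,y)$ are all correct. For test intervals $[b,c]\subset[0,a]$ your reduction to $(1-\alpha^\gamma)\,\alpha^{(\gamma-1)\alpha/(1-\alpha)}\leqslant 1-\alpha$ is also correct, and that inequality is true; but it is proved most cleanly not by ``convexity of $s\log s$'' but by the same device the paper uses in Lemma \ref{podpirprimer}: fix $\alpha$, set $F(\gamma)=(1-\alpha)\log\frac{1-\alpha^\gamma}{1-\alpha}+(\gamma-1)\alpha\log\alpha$, note $F(1)=0$ and $F'(\gamma)=\log\alpha\,\bigl(\frac{\alpha}{1-\alpha}-\frac{\alpha^\gamma}{1-\alpha^\gamma}\bigr)\geqslant 0$ for $\gamma\leqslant 1$, hence $F(\gamma)\leqslant 0$.

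The genuine gap is the straddling case $0<b<a<c$, which you dismiss with ``a comparison with the case $c=a$.'' No such comparison is available: the average point over $[b,c]$ is a convex combination of the point for $[b,a]$ and the point $(v,\log v)$, and the set $\{xe^{-y}\leqslant Q\}$ is \emph{not} convex (on a chord of the level curve $xe^{-y}=Q$ the value exceeds $Q$ by AM--GM), so extending the interval past $a$ can strictly \emph{increase} the $A_\infty$ ratio even though it stays below $Q$. Concretely, with $\gamma=1/2$ (so $Q=2e^{-1/2}\approx 1.213$) and $b=0.1a$, the ratio on $[b,a]$ is $\approx 1.047$ while on $[b,1.3a]$ it is $\approx 1.051$: the maximum over $c$ is attained strictly inside $(a,1]$, not at $c=a$. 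So this case needs an actual argument --- either the explicit two-parameter computation (the ratio on $[b,c]$, after scaling $a=1$, equals $\frac{1}{c-b}\bigl[\frac{1-b^\gamma}{\gamma}+(c-1)\bigr]\exp\bigl(\frac{(\gamma-1)((1-b)+b\log b)}{c-b}\bigr)$, which one must bound by $e^{\gamma-1}/\gamma$), or an appeal to the general fact that a Bellman-function maximizer glued along a tangent line has the prescribed constant, which is exactly what the paper cites \cite{BMO:mnogo} for in the parallel situation of Theorem \ref{theorem_Bellman}.
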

This lemma is technical and we skip the proof. Later we prove a similar Lemma \ref{podpirprimer}. 




\subsection{Proof of Theorem \ref{theorem_sharp_Gehr_lemma2}}
\label{s:proof_theorem_Gehr_Lemma_2}

\begin{proof}
Proof of this theorem is basically the same as the proof of Theorem \ref{s: proof : RH1<eAinfty}. However, we give a detailed proof.
Fix an interval $J \subset \R$. We define the $RH_1$ constant of the weight $w$ on the interval $J$ to be
\begin{equation}\label{d: RH1J}
[w]_{RH_{1,J}}:= \sup_{I\subset J} \ave{I}{\left(\frac{w}{\ave{I}{w}}\log\frac{w}{\ave{I}{w}}\right)} \;<\; \infty
\end{equation}
or, equivalently,
\begin{equation}\label{d: RH1Jalt}
\forall I \subset J \;\;\; \ave{I}{\left(w \log w\right)} \leqslant \ave{I}{w} \log \ave{I}{w} + [w]_{RH_{1,J}} \ave{I}{w}
\end{equation}

For a given $Q>0$ we will show that if $[w]_{RH_{1,J}} \leqslant Q$, then for every $0< \ep <\frac{1}{\gamma_+-1}$, where $\gamma_+$ is the larger solution of equation
$$
\gamma-\log(\gamma)=Q+1,
$$
weight $w$ satisfies the Reverse H\"{o}lder inequality with exponent $1+\ep$ on the interval $J$:
$$
\left(\ave{J}{w^{1+\ep}}\right)^{\frac{1}{1+\ep}}\leqslant C \ave{J}{w}.
$$
\begin{zamech}
We remark that $\ep_-$, defined in \eqref{eq_sharp_gehr_lemma2}, is equal to $\frac{1}{\gamma_+ - 1}$. This is because
$$
\frac{1}{\ep_-} - \log\left(1+\frac{1}{\ep_-}\right) = Q,
$$
so
$$
\left(1+\frac{1}{\ep_-}\right) - \log\left(1+\frac{1}{\ep_-}\right) = Q+1,
$$
and now it is clear that $\gamma_+ = 1+\frac{1}{\ep_-}$.
\end{zamech}
For the given $J  \subset \R$ and $Q>0$ we introduce the following function $\mathcal{B}(x,y)$:
$$
\mathcal{B}(x,y)=\sup\{ \ave{J}{w^{1+\ep}}\colon \ave{J}{w}=x, \; \ave{J}{w\log(w)}=y, \; [w]_{RH_1}\leqslant Q \}.
$$
Note that for every subinterval $I \subset J$ and any weight $w$ satisfying $[w]_{RH_{1,J}}\leqslant Q$, the pair of points $(x_I,y_I): =(\ave{I}{w},\ave{I}{w\log w})$ should lie in the domain
$$
\Omega=\Omega_Q = \{(x,y)\colon x\log x\leqslant y\leqslant x\log x+Qx\}.
$$
Boundary curves of $\Omega$ will be denoted by $\Gamma$ and $\Gamma_Q$:
$$
\Gamma=\{(x,y)\colon x\log x=y\},
$$
$$
\Gamma_Q=\{(x,y)\colon y=x\log x+Qx\}.
$$
So, for every weight $w\in RH_{1,J}$ and every subinterval $I\subset J$, point $(x_I,y_I)$ lies in the domain $\Omega$.  It is not hard to see that the opposite is true as well, for every point $(x,y)\in \Omega$ there is a function $w$, satisfying all properties from the definition of $\mathcal{B}(x,y)$ and such that $(x,y)=(\ave{J}{w},\ave{J}{w \log w})$. In fact, if $(x,y)\in \Omega$ then there are two points $V=(v, v\log v)$ and $U=(u, u\log u)$ on $\Gamma$, such that point $(x,y)$ belongs to the line segment connecting $V$ and $U$, $x=sv+(1-s)u$, $y=sv\log v + (1-s)u\log u$ with $s\in[0,1]$, and the whole interval $[U,V]$ lies inside the domain $\Omega$. To see the existence of $w$, simply observe that for $J=[0,1]$ the weight
$$
w(t)=\begin{cases} v, &t\in [0,s]\\
                    u, &t\in [s,1]
                    \end{cases}
$$
has the above properties. Indeed,
$$
\ave{J}{w}=sv+(1-s)u=x, \;\;\;\;\; \ave{J}{w\log(w)}= s \; v \log v + (1-s) \;  u \log u = y,
$$
and for every interval $I\subset J= [0,1]$ we get that the point $(\ave{I}{w}, \ave{I}{w\log(w)})$ is a convex combination of $V$ and $U$ and, moreover, $[w]_{RH_1}\leqslant Q$ since the line segment $[U,V]$ is inside $\Omega$. A simple rescaling argument proves it for general $J$.
Therefore, $\Omega$ is indeed the domain of $\mathcal{B}$.
\end{proof}

\subsubsection{Geometry of $\Omega$}

We need some basic facts about the geometry of $\Omega$. Namely, we want to investigate the following: if $(x,y)\in \Omega$, then what are the equations of tangents to $\Gamma_Q$, which pass through $(x,y)$? In particular, what happens if $y=x\log x$.
\begin{lemma}
Let $V=(v, v\log v)\in \Gamma$ and $a_v = \gamma_+ v$. Then the line
$$
\ell_v \colon y=(\log v + \gamma_+)x - v\gamma_+
$$
is tangent to $\Gamma_Q$. Moreover, $\ell_v \cap \Gamma_Q = \{(a_v, a_v\log(a_v)+a_v Q)\}$, where $a_v=\gamma_+ v$.
\end{lemma}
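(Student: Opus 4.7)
The plan is a direct computation: parameterize $\Gamma_Q$, write down a generic tangent, impose that it passes through $V=(v,v\log v)$, and match the resulting constraint on the point of tangency with the defining equation of $\gamma_+$.

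First I would compute the tangent to $\Gamma_Q = \{y = x\log x + Qx\}$ at a generic point $(a,a\log a + Qa)$. Differentiating gives slope $\log a + 1 + Q$, and after the $a\log a$ cancellation the tangent line takes the clean form
$$
y = (\log a + 1 + Q)x - a.
$$
Substituting the condition $(v,v\log v)\in \ell$, dividing by $v$, and setting $t = a/v$, the equation collapses to
$$
t - \log t = 1 + Q.
$$

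At this point I would invoke the definition of $\gamma_+$ as the larger root of $\gamma-\log\gamma=Q+1$ to conclude $t = \gamma_+$, i.e.\ $a = \gamma_+ v = a_v$. (The equation $t-\log t = Q+1$ also has a smaller root $\gamma_-<1$; the lemma simply fixes the choice $a_v=\gamma_+ v$, and for the assertion it suffices to verify this root works.) To put the tangent line into the stated form, I would use the identity $\log\gamma_+ = \gamma_+ - (Q+1)$ coming from the very equation defining $\gamma_+$. Then
$$
\log a_v + 1 + Q \;=\; \log v + \log\gamma_+ + 1 + Q \;=\; \log v + \gamma_+,
$$
and $-a_v = -v\gamma_+$, so $\ell_v$ is exactly $y = (\log v + \gamma_+)x - v\gamma_+$ as claimed. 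The unique intersection with $\Gamma_Q$ is, by construction, the point of tangency $(a_v, a_v\log a_v + Qa_v)$.

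The computation is routine and there is no real obstacle: the only delicate point is making the correct choice between the two roots $\gamma_\pm$ of $\gamma - \log\gamma = Q+1$, but the lemma's statement picks $\gamma_+$ for us, and the identity $\log\gamma_+ = \gamma_+ - Q - 1$ is precisely what converts the ``$\log a$'' in the slope into the ``$\gamma_+$'' appearing in the lemma.
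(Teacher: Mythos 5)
Your computation is correct and is exactly the routine calculus exercise the paper has in mind (the authors explicitly leave this lemma to the reader): differentiate $x\log x+Qx$, write the tangent at $a$, and use $\log\gamma_+=\gamma_+-Q-1$ to identify the slope $\log a_v+1+Q$ with $\log v+\gamma_+$ and the intercept $-a_v$ with $-v\gamma_+$. One small touch-up: that the tangency point is the \emph{only} point of $\ell_v\cap\Gamma_Q$ is not ``by construction'' but follows from strict convexity of $g(x)=x\log x+Qx$ (since $g''(x)=1/x>0$, the function $g-\ell_v$ is strictly convex, vanishes together with its derivative at $a_v$, hence is positive everywhere else), and your choice of the root $\gamma_+$ rather than $\gamma_-$ is indeed all that the statement requires.
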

Proof of this lemma is a simple exercise in calculus, so we will leave it to the reader.
For any point $(x,y)\in \Omega$ we have a line $\ell(x,y)$ now, tangent to $\Gamma_Q$, which passes through $(x,y)$ and has an equation
$$
y=(\log v + \gamma_+)x - v\gamma_+,
$$
where $v\leqslant x$.

Take $a_v=\gamma_+ v$, so that we have $v\leqslant x\leqslant a_v$.
\begin{center}
\includegraphics[width=0.5\linewidth]{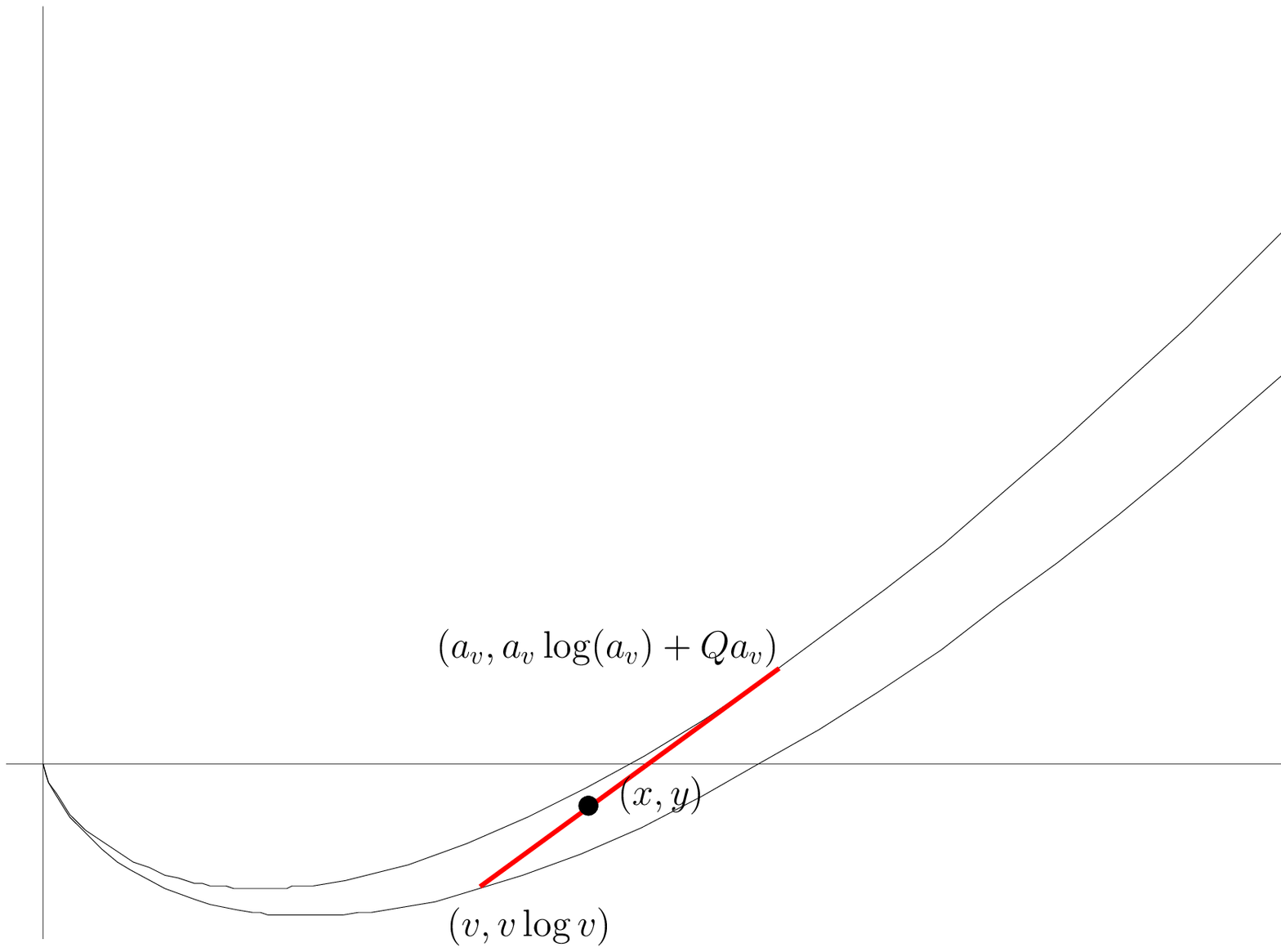}
\end{center}
\begin{center}
\includegraphics[width=0.5\linewidth]{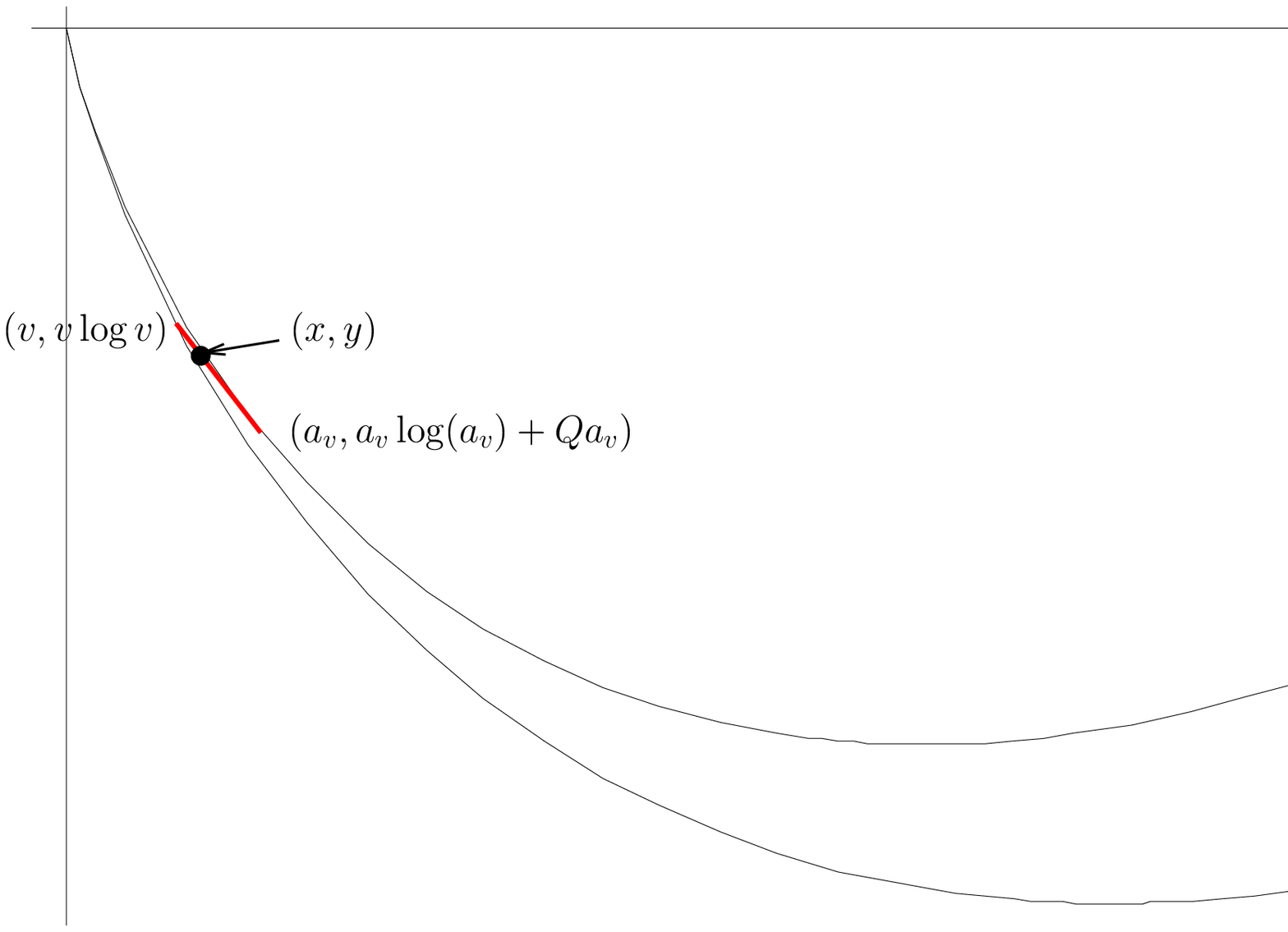}
\end{center}
Now we are ready to formulate the following theorem.
\begin{theorem}\label{theorem_Bellman}
Assume $0<\ep<\frac{1}{\gamma_+-1}$. Then
$$
\mathcal{B}(x,y)=\frac{v(x,y)^\ep}{1+\ep-\gamma\ep}(x(1+\ep)-\ep\gamma v(x,y)),
$$
where $v(x,y)$ satisfies an implicit formula
$$
y=(\log v + \gamma_+)x - v\gamma_+,
$$
$$v(x,y)\leqslant x \leqslant \gamma_+ v(x,y).$$
Moreover, if $(x,y)\in \Gamma_Q$ then the supremum is attained on a function
$
w_{ex}(t)=\frac{x}{\gamma_+}t^{\frac{1-\gamma_+}{\gamma_+}}
$, while
for an arbitrary $(x,y)\in \Omega$ the supremum is attained on a function of the form
$$
w_{ex}(t)=\begin{cases}Ct^{\frac{1-\gamma_+}{\gamma_+}}, &t\in (0, a]\\
                       Ca^{\frac{1-\gamma_+}{\gamma_+}}, &t\in [a, 1].
                       \end{cases}
$$
\end{theorem}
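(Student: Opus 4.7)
The plan is the standard two-sided Bellman function method. First I would show $\mathcal{B}(x,y) \leq B(x,y)$ by verifying that the candidate $B(x,y) = \frac{v^\ep}{1+\ep-\gamma\ep}(x(1+\ep)-\ep\gamma v)$ is a Bellman supersolution, and then realize the bound with an explicit extremal weight to get the matching lower bound $\mathcal{B}(x,y) \geq B(x,y)$.

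For the upper bound I would first check the boundary values. On $\Gamma$ the implicit equation forces $v(x,y)=x$, so direct substitution yields $B(x,x\log x)=x^{1+\ep}$, which matches $\ave{J}{w^{1+\ep}}$ for any constant weight. On $\Gamma_Q$ the tangency gives $x=\gamma_+ v$, and the coefficient $1+\ep-\gamma_+\ep$ is positive exactly when $\ep < 1/(\gamma_+-1)$; this pinpoints the exponent threshold in the statement. The heart of the upper bound is the concavity property: by implicit differentiation of $y=(\log v+\gamma_+)x-v\gamma_+$ one reads off $v'_x$ and $v'_y$, substitutes into the Hessian of $B$, and verifies that it is negative semidefinite with vanishing determinant (the Monge-Amp\`ere equation). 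This reflects a one-parameter structure of $B$: for each fixed $v$, the level set $\{(x,y)\colon v(x,y)=v\}$ is precisely the tangent line $\ell_v$, on which $B$ is an affine function of $x$. With concavity in hand, I would apply Lemma \ref{lemma1-2} to partition $J$ into a sequence of subintervals whose consecutive splits stay in a slightly enlarged domain $\Om_{Q_1}$, iterate the two-point concavity inequality, and pass to the limit exactly as in the proof of Theorem \ref{theorem_RH1<Ainfty} — truncating $w$ first via Lemma \ref{lemma_technical1} to keep all averaged points in a compact set — to obtain $\ave{J}{w^{1+\ep}} \leq B(\ave{J}{w}, \ave{J}{(w\log w)})$.

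For the reverse inequality and the extremal claim, I would verify that the proposed $w_{ex}$ saturates $B$. On $\Gamma_Q$ the power weight $w_{ex}(t)=(x/\gamma_+)t^{(1-\gamma_+)/\gamma_+}$ satisfies $\ave{J}{w_{ex}}=x$, $\ave{J}{(w_{ex}\log w_{ex})}=x\log x+Qx$, and a direct computation shows $\ave{J}{w_{ex}^{1+\ep}}$ equals $B$ at that boundary point; the integral $\int_0^1 t^{(1-\gamma_+)(1+\ep)/\gamma_+}\,dt$ converges precisely because $\ep<1/(\gamma_+-1)$. For an interior $(x,y)\in \Om$, the truncated power function has a free parameter $a$ that can be tuned so the two averages land exactly at $(x,y)$, and the value $\ave{J}{w_{ex}^{1+\ep}}$ equals $B(x,y)$ because such a weight realizes the extremal splitting along the tangent line $\ell(x,y)$ between a power tail on $\Gamma_Q$ and a constant piece on $\Gamma$. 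Sharpness of Theorem \ref{theorem_sharp_Gehr_lemma2} then follows because at the critical exponent $\ep_-=1/(\gamma_+-1)$ the power integral diverges.

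The main obstacle I anticipate is the concavity verification: implicit differentiation of $v(x,y)$ produces somewhat unwieldy expressions, and confirming negative semidefiniteness of the Hessian of $B$ together with $\det=0$ requires careful algebraic simplification. A cleaner path is to exploit the one-parameter structure directly: since $B$ restricted to each tangent line $\ell_v$ is affine in $x$, linearity along those lines is automatic, and concavity on all of $\Om$ reduces to a one-variable sign check on how $B$ varies across the family $\{\ell_v\}_{v>0}$ — essentially, to differentiating $B$ once in the transverse parameter $v$. Verifying that this transverse derivative has a definite sign in the admissible range of $\ep$ should be the only nontrivial algebra.
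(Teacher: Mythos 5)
Your plan is correct and follows essentially the same route as the paper: the upper bound via local concavity (Monge--Amp\`ere structure, affine along the tangent lines $\ell_v$) combined with the splitting Lemma \ref{lemma1-2}, truncation via Lemma \ref{lemma_technical1} and a limiting argument on a slightly enlarged domain $\Om_{Q_1}$, and the matching lower bound via the power extremizer on $\Gamma_Q$ and the glued power--constant weight at interior points, with sharpness from divergence at $\ep_-=\frac{1}{\gamma_+-1}$. The only substantive verification you leave unstated is that the extremal weights are actually admissible, i.e.\ $[w_{ex}]_{RH_1}\leqslant Q$ on \emph{every} subinterval, which is the main computation in the paper's Lemma \ref{podpirprimer} (the sign analysis of $(\gamma_+-1)s\log s-(1-s)\log\frac{1-s}{1-s^{\gamma_+}}$), so your verification step should explicitly include it.
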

\begin{proof}
We denote
$$
B(x,y)=\frac{v(x,y)^\ep}{1+\ep-\gamma\ep}(x(1+\ep)-\ep\gamma v(x,y)).
$$
The goal is, therefore, to show that $\mathcal{B}=B$.

We break the proof into several lemmas.
\begin{lemma}
The function $B(x,y)$ is locally concave in $\Omega$. That is, the hessian of $B$
$$
\begin{pmatrix} B''_{xx} & B''_{xy}\\
                B''_{xy} & B''_{yy}
                \end{pmatrix}
$$
is a negatively semidefinite matrix.
\end{lemma}
Checking this condition requires nothing but careful differentiation. However, we want to point out that this lemma is true since $\gamma v(x,y) - x\geqslant 0$ for every $(x,y)\in \Omega$. It shows that we could not consider another tangent line from $(x,y)$ to $\Gamma_Q$.

\begin{flushright}
\begin{flushright}
\begin{flushright}
\begin{flushright}

\end{flushright}
\end{flushright}
\end{flushright}
\end{flushright}

Local concavity of the function $B$ implies the following lemma.
\begin{lemma}
The following inequality holds
$$
B(x,y)\geqslant \mathcal{B}(x,y)
$$
\end{lemma}
\begin{proof}
We first observe that on the boundary curve $\Gamma$ we have $\mathcal{B}(v, v\log v)=v^{1+\ep}=B(v, v\log v)$, since the only admissible function $w$ for the point $(v, v\log v)$ is the constant function $w(t)\equiv v$.

We consider a function $B_{Q_1}$, which is defined like $B$, but with $Q_1$ instead of $Q$.
Take a point $(x,y)$ and an arbitrary $w$, $[w]_{RH_1}\leqslant Q$, such that $(x,y)=(\ave{J}{w}, \ave{J}{w\log(w)})$. Assume that $\frac{1}{n}\leqslant w(t)\leqslant n$ for every $t$. Then, in particular,
$$
\ave{J}{w}\in \left[\frac{1}{n}, n\right].
$$
Therefore, the set $\Upsilon=\{(\ave{I}{w}, \ave{I}{w\log w})\colon I\subset J= [0,1]\}$ is compact. Therefore, $B_{Q_1}$ is bounded on $\Upsilon$. Take now $I^{\pm}$ from the Lemma \ref{lemma1-2}.

By $D_{n}$ we denote the set of intervals of $n$-th generation. For example, $D_{0}=\{I\}$ and $D_{1}=\{ I^{-}, I^{+}\}$. For every interval $J\in D_{n}$ we denote
$$
x^{J}=(\ave{J}{w}, \ave{J}{w\log(w)}).
$$

Since $B_{Q_1}$ is locally concave, we can write
$$
B_{Q_1}(x,y)\geqslant |I^+|B_{Q_1}(x^+) + |I^-|B_{Q_1}(x^-).
$$
Repeating this procedure, we get
$$
B_{Q_1}(x,y)\geqslant \sli_{J\in D_{n}}|J|B_{Q_1}(x^{J})=\ili_{0}^{1} B_{Q_1}(x^{n}(t))dt,
$$
where
$x^{n}(t)$ is a step-function, defined in the following way: take $J\in D_{n}$ and denote $x^{n}(t)=x^{J}, \; t\in J$.
By the Lebesgue differentiation theorem, $x^{n}(t)\to (w(t), w(t)\log w(t))$ for a.e. $t$. Moreover, since $B$ is bounded on the set $\{x^J\}$, we can pass to the limit under integral. We get
$$
B_{Q_1}(x,y)\geqslant \ili_{0}^{1} B_{Q_1}(w(t), w(t)\log(w(t)))dt = \ili_{0}^{1} w^{1+\ep}(t)=\ave{J}{w^{1+\ep}}.
$$

If $w$ is unbounded, we consider
$$
w_n(t)=\begin{cases} \frac{1}{n}, &w(t)\leqslant \frac{1}{n},\\
                    w(t), &w(t)\in [\frac{1}{n}, n],\\
                    n, &w(t)\geqslant n. \end{cases}
$$
Then, by the Lemma \ref{lemma_technical1}, $[w_n]_{RH_1}\leqslant [w]_{RH_1}\leqslant Q$, and
$$
B_{Q_1}(x,y)\; \geqslant \; \ave{J}{w_n^{1+\ep}}.
$$
Using the Lebesgue Monotonic convergence theorem, we get
$$
B_{Q_1}(x,y)\; \geqslant \; \ave{J}{w^{1+\ep}}
$$
for every admissible function $w$. After taking the supremum over $w$ we have
$$
B_{Q_1}(x,y)\geqslant \mathcal{B}(x,y)
$$
for every $Q_1>Q$. Since $B_{Q_1}$ is continuous in $Q$, we can write
$$
B(x,y)\geqslant \mathcal{B}(x,y).
$$
\end{proof}

We have shown that $B(x,y)\geqslant \mathcal{B}(x,y)$.
In order to complete the proof of the theorem, we need to show the opposite inequality,
$$
B(x,y)\leqslant \mathcal{B}(x,y).
$$
\begin{lemma}\label{podpirprimer0}
For every point $(x,y)\in \Om$ there exists a function $w_{ex}$, such that
\begin{align*}
&\ave{I}{w_{ex}}=x, \\
&\ave{I}{(w_{ex} \log(w_{ex}))}=y, \\
&[w_{ex}]_{RH_1}\leqslant Q, \\
& B(x,y)=\ave{I}{(w_{ex}^{1+\ep})}.
\end{align*}
Consequently, $B(x,y)\leqslant \mathcal{B}(x,y)$.
\end{lemma}
First we consider the point $(x,y)\in \Gamma_Q$ and
$$
w_{ex}(t)=\frac{x}{\gamma_+}t^{\frac{1-\gamma_+}{\gamma_+}}.
$$
\begin{lemma}\label{podpirprimer}
The function $w_{ex}$ satisfies the inequality $[w_{ex}]_{RH_1}\leqslant Q$.
Moreover, for every $\ep<\frac{1}{\gamma_+ - 1}$ we have
$$B(\ave{I}{w_{ex}}, \ave{I}{(w_{ex}\log(w_{ex}))}) = \ave{I}{(w_{ex}^{1+\ep})}.$$
Finally,
$$\ave{I}{(w_{ex}^{1+\frac{1}{\gamma_+ -1}})} = \infty.$$
\end{lemma}
\begin{zamech}
Notice that the big part of this lemma repeats conditions from the Lemma \ref{podpirprimer0}. However, the last equality shows the sharpness, declared in the Theorem \ref{theorem_sharp_Gehr_lemma2}.
\end{zamech}
\begin{proof}[Proof of the Lemma \ref{podpirprimer}]
To prove that $[w_{ex}]_{RH_1}\leqslant Q$, we take an interval $J=[a,b]$ and write
$$
\av{w}{[a,b]}=x\frac{1}{b-a} (b^{\frac{1}{\gamma_+}}-a^{\frac{1}{\gamma_+}}),
$$
$$
\av{w\log(w)}{[a,b]}=x\log\left(\frac{x}{\gamma_+}\right)(b^{\frac{1}{\gp}}-a^{\frac{1}{\gp}})+x\frac{1-\gp}{\gp}(b^{\frac{1}{\gp}}\log b-a^{\frac{1}{\gp}}\log a)-x(1-\gp)(b^{\frac{1}{\gp}}-a^{\frac{1}{\gp}}).
$$
We substitute
\begin{align*}
\alpha=a^{\frac{1}{\gp}}, & \beta=b^{\frac{1}{\gp}} \\
\alpha=s\beta.
\end{align*}
Then, after some technical calculations, using the definition of $\gp$, we obtain that
$$
\av{w\log(w)}{J}-\av{w}{J}\log(\av{w}{J})-Q\av{w}{J}
$$
has the same sign as
$$
(\gp-1)s\log s - (1-s)\log\frac{1-s}{1-s^{\gp}}.
$$
We now use the following trick. Fix $s\in (0,1)$ and denote
$$
\vf(\gamma)=(\gamma-1)s\log s - (1-s)\log\frac{1-s}{1-s^{\gamma}}.
$$
Obviously, $\vf(1)=0$. Simple calculation shows that $\vf^{\prime}(\gamma)\leqslant 0$ if $\gamma\geqslant 1$, which yields, since $\gp>1$,
$$
\vf(\gp)\leqslant 0.
$$
Therefore, if $s\in (0,1)$, then
$$
(\gp-1)s\log s - (1-s)\log\frac{1-s}{1-s^{\gp}}\leqslant 0.
$$
It is easy to see that the same inequality holds for $s=0$ and $s=1$. Therefore,
$$
\av{w\log(w)}{J}-\av{w}{J}\log(\av{w}{J})-Q\av{w}{J}\leqslant 0,
$$
so $[w_{ex}]_{RH_1}\leqslant Q$.
Moreover,
$$
\ave{J}{w_{ex}^{1+\ep}}=\frac{x^{1+\ep}}{\gp^{\ep}}\frac{1}{1+\ep-\gp \ep}=B(x, y),
$$
since $(x,y)\in \Gamma_Q$, and so $x=\gp v$.
Finally, it is clear that $\ave{I}{(w_{ex}^{1+\frac{1}{\gamma_+-1}})}=\infty$, which finishes the proof of the Lemma \ref{podpirprimer}.
\end{proof}
We now proceed to the arbitrary $(x,y)\in \Om_Q$. Take the tangent $\ell(x,y)$ and defined before $v=v(x,y)$, $a_v=\gp v(x,y)$.
Define
$$
w_{ex}(t)=\begin{cases} v \left(\frac{t}{u}\right)^{\frac{1-\gp}{\gp}}, &t\in[0,u]\\
                    v, &t\in [u,1] \end{cases}.
$$
Note that we ``glue'' two functions: the extremal function for the point $(v, v\log v)$ and the extremal function for $(a, a\log a + Qa)$. We should glue them so $x=\ave{I}{w}$. Since
$$
x=v\frac{a-x}{a-v} + a \frac{x-v}{a-v},
$$
we take $u=\frac{x-v}{a-v}$.
The inequality $[w]_{RH_1}\leqslant Q$ is left to the reader. However, it is a big pleasure to point out that the calculations are not needed because of the proof of such facts (a ``maximizer'' for Bellman function has the desired constant), given by  P. Ivanishvili, N. Osipov, D. Stolyarov, V. Vasyunin and P. Zatickiy, see \cite{BMO:mnogo}.

It remains to show that $\ave{I}{(w_{ex}^{1+\ep})}=B(x,y)$, but it follows from the fact that $B$ is linear on tangent lines to $\Gamma_Q$. We have proved that
$$
B(x,y)=\ave{J}{w_{ex}^{1+\ep}}\leqslant \mathcal{B}(x,y),
$$
which finishes the proof of the Theorem \ref{theorem_Bellman} and the proof of the equality $B(x,y)=\mathcal{B}(x,y)$.

\end{proof}





\subsection{Proof of the Gehring theorem for the case $p=1$ in dimension~$n$}
\label{s:proof_theorem_Gehr_p=1_n}

\begin{theorem}
$w\in RH_1$, then
$$
w\in RH_{1+\ep} \;\;\;\; for \;\; all \;\; \ep \leqslant \frac{\log 4}{n \log 2 + 8[w]_{RH_1^{\prime\prime}}},
$$
where $n$ is the dimension of the underlying space (or related to the doubling constant of the underlying measure).
\end{theorem}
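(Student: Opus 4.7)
My plan is to carry out the classical Calder\'on--Zygmund / Iwaniec proof of Gehring's lemma, adapted to the endpoint $p=1$ through the Luxemburg form of the hypothesis $Q:=[w]_{RH_1^{\prime\prime}}$. Fix a cube $I_0\subset\R^n$; by homogeneity I may assume $m_{I_0}w=1$, and the theorem follows once I bound $m_{I_0}(w^{1+\varepsilon})$ by a constant depending only on $n$, $Q$, $\varepsilon$. Setting $W(E):=\int_E w\,dx$ and $g(s):=W(\{w>s\}\cap I_0)$, the layer-cake identity
\[
\frac{1}{|I_0|}\int_{I_0}w^{1+\varepsilon}\,dx\;=\;\frac{\varepsilon}{|I_0|}\int_0^\infty t^{\varepsilon-1}g(t)\,dt
\]
reduces everything to showing polynomial decay $g(t)\leqslant C|I_0|\,t^{-\alpha}$ with $\alpha\geqslant\varepsilon$.

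\textbf{Stopping time and master recursion.} For each $s>1$ I would run the dyadic Calder\'on--Zygmund stopping time for $w$ on $I_0$ at level $s$, producing disjoint dyadic subcubes $\{I_j(s)\}$ with $s<m_{I_j(s)}w\leqslant 2^ns$ and $w\leqslant s$ a.e.\ outside $\Om_s:=\bigcup_j I_j(s)$. Two elementary inequalities are used throughout: $g(s)\leqslant W(\Om_s)$ since $\{w>s\}\subset\Om_s$ modulo null, and $W(\Om_s)\leqslant 2g(s/2)$, obtained by splitting $\Om_s$ according to whether $w\leqslant s/2$ or $w>s/2$ and absorbing via $s|\Om_s|\leqslant W(\Om_s)$. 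For $\lambda>2^n$ the inclusion $\{w>\lambda s\}\subset\Om_s$ holds and $\log(w/s)>\log\lambda$ on that set, giving
\[
\log\lambda\cdot g(\lambda s)\;\leqslant\;\sum_j\int_{I_j(s)\cap\{w>\lambda s\}}w\log(w/s)\,dx.
\]
I would then split $\log(w/s)=\log(w/m_{I_j(s)}w)+\log(m_{I_j(s)}w/s)$ on each $I_j(s)$, controlling the second summand by $n\log 2$ and the integral of the first by $Q\,W(I_j(s))$ (absorbing the absolute constant from the Iwaniec--Verde equivalence into $Q$). Summing in $j$ and invoking $W(\Om_s)\leqslant 2g(s/2)$ yields the master recursion
\[
\log\lambda\cdot g(\lambda s)\;\leqslant\;2(n\log 2+Q)\,g(s/2).
\]

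\textbf{Iteration, conclusion, and the one genuine obstacle.} Choosing $\lambda$ with $\log\lambda=8(n\log 2+Q)$ converts the master recursion into $g(\lambda s)\leqslant\tfrac14 g(s/2)$, or equivalently $g(u)\leqslant\tfrac14 g(u/(2\lambda))$ in the variable $u=\lambda s$. Iterating down to the scale $u\sim 1$, at which $g\leqslant|I_0|$, produces $g(u)\leqslant C(n,Q)|I_0|u^{-\alpha}$ with $\alpha=\log 4/\log(2\lambda)$; plugging this into the layer-cake identity above gives $m_{I_0}w^{1+\varepsilon}\leqslant 1+C\varepsilon/(\alpha-\varepsilon)$ uniformly in $I_0$ for every $\varepsilon<\alpha$, whence $w\in RH_{1+\varepsilon}$. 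The only real obstacle is cosmetic rather than structural: matching the stated denominator $n\log 2+8Q$ exactly, instead of the $\log 2+8(n\log 2+Q)$ that the argument produces verbatim, requires either tracking absolute constants more aggressively through the Iwaniec--Verde equivalence and through $W(\Om_s)\leqslant 2g(s/2)$, or permitting a small innocuous loss in the final coefficient. A final formal remark is that the dyadic Calder\'on--Zygmund argument assumes $I_0$ belongs to some fixed dyadic lattice; for an arbitrary cube one simply translates the lattice so that $I_0$ is a cube of the translated lattice, with no effect on the constants.
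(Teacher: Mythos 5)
Your argument is structurally sound, but it follows a genuinely different route from the paper, and the difference matters for the constant. The paper does not run a Calder\'on--Zygmund/Gehring level-set iteration at all: it applies the generalized H\"older inequality for the dual Orlicz pair $(L\log L,\, e^L-1)$ to $f=\chi_E$, computes $\norm{\chi_E}{e^L-1,I}=1/\log\bigl(1+|I|/|E|\bigr)$ exactly, and deduces the $(\alpha,\beta)$-type $A_\infty$ condition: $|E|/|I|\leqslant \alpha:=\bigl(e^{8[w]_{RH_1^{\prime\prime}}}-1\bigr)^{-1}$ forces $w(E)/w(I)\leqslant \beta:=\tfrac14$. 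It then quotes the standard lemma (Garc\'{\i}a-Cuerva--Rubio de Francia, p.~398) that $(2^n\alpha^{-1})^{\varepsilon}\beta<1$ suffices for $w\in RH_{1+\varepsilon}$, and solving $\bigl(2^n e^{8[w]_{RH_1^{\prime\prime}}}\bigr)^{\varepsilon}=4$ gives precisely $\varepsilon=\log 4/(n\log 2+8[w]_{RH_1^{\prime\prime}})$. Your self-contained recursion
$g(\lambda s)\leqslant \tfrac{2(n\log 2+CQ)}{\log\lambda}\,g(s/2)$
is correct (the restriction of $\int w\log(w/m_{I_j}w)$ to $\{w>\lambda s\}$ is legitimate because $\lambda>2^n$ makes the integrand positive there and one passes to $\log(e+\cdot)$ via Iwaniec--Verde), and the iteration plus layer-cake does yield $w\in RH_{1+\varepsilon}$ for an explicit $\varepsilon$ of the same order $\sim 1/(n+Q)$; what your approach buys is independence from the cited lemma, at the cost of the exact coefficient.

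The one point where I would push back is your characterization of the constant mismatch as ``cosmetic.'' The statement being proved is the explicit formula $\varepsilon\leqslant\log 4/(n\log 2+8[w]_{RH_1^{\prime\prime}})$, and within your scheme the shortfall is not recoverable by bookkeeping: the $n\log 2$ term enters your exponent through the choice $\log\lambda=8(n\log 2+Q)$ and through the extra $\log 2$ from $g(s/2)$, so it appears multiplied by the large factor rather than once linearly, and the Iwaniec--Verde equivalence injects a genuine absolute constant in front of $Q$ as well. Optimizing over $\lambda$ still leaves you a bounded factor short of the stated denominator. Since the theorem is avowedly non-sharp this is a quantitative rather than conceptual defect, but as a proof of the literal statement it is a gap; to get the stated $\varepsilon$ one should either derive the $(\alpha,\beta)$ condition as the paper does and invoke the cited lemma, or restate the conclusion with your (comparable but smaller) admissible range of $\varepsilon$.
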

\begin{proof}
Let $w\in RH_1$, then (by (\ref{def_equiv2_RH1}))
\begin{equation}
\label{eq_proof_gehr_11}\forall I\subset \R^n \;\;\;\; \norm{w}{L \log L,I} \; \leqslant \; [w]_{RH_1^{\prime\prime}} \norm{w}{L,I},
\end{equation}
where $\norm{w}{\Phi(L)}$ is Orlitz norm of $w$,
\begin{equation}
\label{eq_proof_gehr_12} if \;\; \Phi = L \log L \;\; then \;\; \bar{\Phi}_n(t) \cong e^L - 1
\end{equation}
and one can write generalized H\"{o}lder's inequality:
$$
\forall I \;\;\;\; \int_I \mdl{f(x) \; g(x)} dx \; \leqslant \; \norm{f}{L\log L,I} \norm{g}{e^L - 1,I}.
$$
Applying this to the $\int_I \mdl{f} w dx$, we can write $\forall f$, $\forall w\in RH_1$ and $\forall I\in D$
\begin{equation}
\label{eq_proof_gehr_14} \int_I |f| w \leqslant 2 \norm{w}{L\log L,I} \norm{f}{e^L-1} \leqslant [by \; (\ref{eq_proof_gehr_11})] \leqslant 2[w]_{RH_1^{\prime\prime}} \norm{w}{L,I} \norm{f}{e^L-1,I}.
\end{equation}
Note first that $\norm{w}{L,I} = \frac{1}{|I|}\int_I w$:
$$
\norm{w}{L,I} \; = \; \inf \left\{ \lambda > 0: \;\; \frac{1}{|I|} \int_I \frac{w}{\lambda} \; \leqslant 1 \right\} \; = \; \frac{1}{|I|}\int_I w.
$$
So, (\ref{eq_proof_gehr_14}) becomes
\begin{equation}
\label{eq_proof_gehr_15} \frac{1}{|I|}\int_I |f|w \; \leqslant \; 2 [w]_{RH_1^{\prime\prime}} \frac{1}{|I|}\int_I w \norm{f}{e^L-1,I}.
\end{equation}
In order to apply inequality (\ref{eq_proof_gehr_15}) to the $f=\chi_E$ for an $E\subset I$,
\begin{eqnarray*}
\norm{\chi_E}{e^L-1,I} &=& \inf \left\{ \lambda > 0: \;\; \frac{1}{|I|} \int_I e^{\frac{\chi_E}{\lambda}} - 1 \leqslant 1 \right\}\\
&=& \inf \left\{ \lambda > 0: \;\; \frac{1}{|I|} \int_E e^{\frac{1}{\lambda}} - 1 \leqslant 1 \right\}\\
&=& \inf \left\{ \lambda > 0: \;\; \frac{|E|}{|I|} \left( e^{\frac{1}{\lambda}} - 1 \right) \leqslant 1 \right\} = \frac{1}{\log \left( 1+\frac{|I|}{|E|} \right)}
\end{eqnarray*}
and then (\ref{eq_proof_gehr_15}) applied to $f=\chi_E$ implies:
\begin{equation}
\label{eq_proof_gehr_17} \frac{w(E)}{w(I)} \; \leqslant \; 2[w]_{RH_1^{\prime\prime}} \frac{1}{\log \left( 1+ \frac{|I|}{|E|} \right)}
\end{equation}
or
$$
\frac{w(E)}{w(I)} \log \left( 1+ \frac{|I|}{|E|} \right) \; \leqslant \; 2[w]_{RH_1^{\prime\prime}}.
$$
Note also that, since $\frac{|I|}{|E|} \geqslant 1$, $\log \left( 1+ \frac{|I|}{|E|} \right) \leqslant \log \left( 2\frac{|I|}{|E|} \right)$.
Take $$\alpha = \frac{1}{e^{8[w]_{RH_1^{\prime\prime}}}-1},$$ then whenever $\frac{|E|}{|I|} \leqslant \alpha$, since $\log \left( 1+\frac{1}{\alpha} \right)$ is a decreasing function of $\alpha$,
\begin{eqnarray*}
\log \left( 1+ \frac{|I|}{|E|} \right) &=& \log \left( 1+ \frac{1}{\frac{|E|}{|I|}} \right) \geqslant \log \left( 1+ \frac{1}{\alpha} \right)=\\
&=& \log \left( e^{8[w]_{RH_1^{\prime\prime}}} \right) = 8[w]_{RH_1^{\prime\prime}},
\end{eqnarray*}
i.e. whenever $\frac{|E|}{|I|}\leqslant \alpha = \frac{1}{e^{8[w]_{RH_1^{\prime\prime}}}-1}$, we get $\log \left( 1+ \frac{|I|}{|E|} \right) \geqslant 8[w]_{RH_1^{\prime\prime}}$, so we can write (\ref{eq_proof_gehr_17}) as
$$
\frac{w(E)}{w(I)} \;\leqslant\; 2[w]_{RH_1^{\prime\prime}} \frac{1}{\log \left( 1+ \frac{|I|}{|E|} \right)} \; \leqslant \; \frac{2[w]_{RH_1^{\prime\prime}}}{8[w]_{RH_1^{\prime\prime}}} = \frac{1}{4},
$$
or, for simplicity,
\begin{equation}
\label{eq_proof_gehr_20} \frac{|E|}{|I|} \leqslant \frac{1}{e^{8[w]_{RH_1^{\prime\prime}}}-1} = \alpha \;\;\; \Rightarrow \;\;\; \frac{w(E)}{w(I)} \leqslant \frac{1}{4} =: \beta.
\end{equation}
See Rubio de Francia-Garcia-Cuerva book, page 398, in order for $w$ to belong to $RH_{1+\varepsilon}$, it is enough to pick an $\varepsilon$ such that $(2^n \alpha^{-1})^{\varepsilon}\beta <1$. For our choice of $\alpha$ and $\beta$ in \eqref{eq_proof_gehr_20} we need to solve for $\varepsilon$ the following inequality:
$$
\left(2^n \left(e^{8[w]_{RH_1^{\prime\prime}}}-1\right)\right)^{\varepsilon} \frac{1}{4}<1.
$$
To satisfy this inequality it is enough to choose an $\varepsilon$ such that
$$
\left(2^n e^{8[w]_{RH_1^{\prime\prime}}}\right)^{\varepsilon}=4,
$$
which yields to
$$
\varepsilon = \frac{\log(4)}{n\log(2)+8[w]_{RH_1^{\prime\prime}}}.
$$
Thus, if $w\in RH''_{1}$ then $w\in RH_{1+\varepsilon}$ with the above choice of $\varepsilon$.

\end{proof}

\subsection{Proof of the Theorem \ref{Th:funnybound}} \label{Pr:funnybound}

We give a sketch of the proof in spirit of the proof of the 1-Gehring lemma.

Given a function $w\in RH_1$, we want to estimate $\ave{I}{w} \exp\left(-\ave{I}{(\log(w))}\right)$ from above and, therefore, we want to estimate $\ave{I}{(\log(w))}$ from below. Therefore, we denote
$$
B(x) = \inf \{ \ave{I}{(\log(w))}\colon \ave{I}{w}=x, \; \; \ave{I}{(w\log(w))} = y, \; \; [w]_{RH_1}\leqslant Q \}.
$$
The function $B$ is locally convex (we remind that both previous functions $B$ were locally concave, since we considered a $sup$ of something). We now denote by $\gamma_-$ the smaller root of the equation
$$
t-\log(t)=Q+1.
$$
We notice that for big $Q$ our $\gamma_-$ has the following asymptotic:
$$
\gamma_- \sim \frac{1}{e^{Q+1}}.
$$
We now define the function $v(x,y)$ by the equation
$$
y=(\log v + \gamma_-)x - v\gamma_-,
$$
$v\geqslant x$.

The picture is the following: we take a point $(x,y)$ and a tangent line to $\Gamma_Q = \{(x,y)\colon y=x\log(x) + Qx \}$, such that it ``kisses'' $\Gamma_Q$ on the left-hand side of $(x,y)$. Then $(v, v\log(v))$ is the point on the right-hand side of $(x,y)$, where this tangent hits $\Gamma = \{(x,y)\colon y=x\log(x)\}$.

Then the Bellman function $B$ is equal to
$$
B(x,y)=\log(v) + \frac{x-v}{\gamma_- v}.
$$

We skip all details of the proof since they are identical to both previous proofs.

We now notice that we are interested in the quantity
$$
x\exp(-B(x,y)).
$$
This is because $B(x,y)$ is the $\ave{I}{(\log(w))}$, and $x=\ave{I}{w}$. We write
$$
x\exp(-B(x,y)) = \frac{x}{v} \exp \left(\frac{1-\frac{x}{v}}{\gamma_-}\right) = s\exp\left(\frac{1-s}{\gamma_-}\right),
$$
where $s=\frac{x}{v} \in [\gamma_-, 1]$.
We set
$$
f(s)=s\exp\left(\frac{1-s}{\gamma_-}\right).
$$
Then
$$
f'(s) = \exp(\ldots) \left[ 1 - \frac{s}{\gamma_-}\right] \leqslant 0,
$$
so $f(s)\leqslant f(\gamma_-) = \gamma_- \exp\left(\frac{1-\gamma_-}{\gamma_-}\right)$.

Since $\gamma_- \sim e^{(-Q-1)}$, we get
$$
f(\gamma_-) \sim e^{(-Q-1)} e^{e^{Q+1}-1} = e^{e^{Q+1}-Q-2},
$$
which finishes our proof.

To illustrate that this result is sharp, we state the following proposition.
\begin{lemma}
Consider
$$
w(t)=\frac{1}{\gamma_-}t^{\frac{1-\gamma_-}{\gamma_-}}.
$$
Then $[w]_{RH_1}=Q$, and $\ave{I}{w}e^{-\ave{I}{(\log(w))}} = \gamma_- \exp\left(\frac{1-\gamma_-}{\gamma_-}\right)$.
\end{lemma}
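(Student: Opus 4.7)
The plan is to verify both claims by direct computation on $I=[0,1]$, closely following the pattern of Lemma \ref{podpirprimer}. For the normalization, the exponent $(1-\gamma_-)/\gamma_-$ increased by one equals $1/\gamma_-$, so $\int_0^1 w = \gamma_-^{-1}\cdot\gamma_- = 1$ and hence $\ave{I}{w}=1$. Using $\log w(t) = -\log\gamma_- + \tfrac{1-\gamma_-}{\gamma_-}\log t$ together with $\int_0^1 \log t\,dt = -1$ yields $\ave{I}{(\log w)} = -\log\gamma_- - \tfrac{1-\gamma_-}{\gamma_-}$, from which the identity $\ave{I}{w}\,e^{-\ave{I}{(\log w)}} = \gamma_-\exp\bigl((1-\gamma_-)/\gamma_-\bigr)$ falls out immediately, settling the second half of the lemma.

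For the lower bound $[w]_{RH_1}\geq Q$, I would specialize the $RH_1$ quantity to $J=I$ itself. Using $\int_0^1 t^{1/\gamma_- - 1}\log t\,dt = -\gamma_-^2$, a short calculation gives $\ave{I}{(w\log w)} = -\log\gamma_- - 1 + \gamma_-$, which equals $Q$ by the defining equation $\gamma_- - \log\gamma_- = Q+1$. Combined with $\ave{I}{w}\log\ave{I}{w}=0$, this forces $[w]_{RH_1}\geq Q$.

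For the upper bound I would replay the computation from the proof of Lemma \ref{podpirprimer} verbatim, since the algebraic reduction there does not use the hypothesis $\gamma>1$ anywhere. For an arbitrary subinterval $J=[a,b]$, the substitution $\alpha=a^{1/\gamma_-}$, $\beta=b^{1/\gamma_-}$, $s=\alpha/\beta$ reduces the inequality $\ave{J}{(w\log w)} - \ave{J}{w}\log\ave{J}{w} - Q\ave{J}{w}\leq 0$ to verifying
$$
\varphi(\gamma_-) := (\gamma_- - 1)s\log s - (1-s)\log\frac{1-s}{1-s^{\gamma_-}} \leq 0, \qquad s\in[0,1].
$$
The only new input relative to Lemma \ref{podpirprimer} is the sign analysis for $\gamma<1$: a direct differentiation gives
$$
\varphi'(\gamma) \;=\; \frac{s\log s\,(1-s^{\gamma-1})}{1-s^\gamma},
$$
and for $\gamma<1$, $s\in(0,1)$ both $s\log s$ and $1-s^{\gamma-1}$ are negative while $1-s^\gamma>0$, so $\varphi'>0$ throughout $(\gamma_-,1)$. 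Since $\varphi(1)=0$ and the endpoint cases $s\in\{0,1\}$ are trivial, this yields $\varphi(\gamma_-)\leq 0$ and hence $[w]_{RH_1}\leq Q$.

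The main obstacle, if there is one, is purely bookkeeping: one must check that the reduction in Lemma \ref{podpirprimer} does not secretly invoke $\gamma>1$ (for instance, via a sign choice when clearing a denominator in $(1-s^\gamma)$), which is easy to verify since $\gamma_->0$ keeps all the quantities $b^{1/\gamma_-}-a^{1/\gamma_-}$ and $1-s^{\gamma_-}$ of the correct sign. Once this check is in place, the upper and lower bounds match and $[w]_{RH_1}=Q$ follows, completing the proof.
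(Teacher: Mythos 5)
Your proof is correct, and it fills in an argument the paper itself omits: the lemma is stated at the end of Section \ref{Pr:funnybound} without proof, the intended justification being exactly the computation you carried out, modeled on Lemma \ref{podpirprimer}. Your averages on $I=[0,1]$ ($\ave{I}{w}=1$, $\ave{I}{(\log w)}=-\log\gamma_- -\tfrac{1-\gamma_-}{\gamma_-}$, $\ave{I}{(w\log w)}=\gamma_- -1-\log\gamma_- =Q$) are right and give both the displayed $A_\infty$-type quantity and the bound $[w]_{RH_1}\geqslant Q$; and your check that the reduction to the sign of $(\gamma-1)s\log s-(1-s)\log\tfrac{1-s}{1-s^{\gamma}}$ uses only divisions by the positive quantities $b-a$, $\beta-\alpha$, $1-s$ is the correct bookkeeping point, so the template of Lemma \ref{podpirprimer} applies verbatim with $\gamma_-<1$. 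The one genuinely new ingredient relative to the paper --- that $\varphi'(\gamma)=\tfrac{s\log s\,(1-s^{\gamma-1})}{1-s^{\gamma}}$ is positive for $\gamma\in(0,1)$, so $\varphi(\gamma_-)\leqslant\varphi(1)=0$ (rather than the paper's $\varphi'\leqslant 0$ for $\gamma\geqslant 1$) --- is handled correctly, giving $[w]_{RH_1}\leqslant Q$ and hence equality.
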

\bibliographystyle{plain}
\fontsize{6}{5}\selectfont

\end{document}